\numberwithin{equation}{section}
\newtheorem{theorem}{Theorem}
    \newtheorem{lemma}[theorem]{Lemma}
    \newtheorem{proposition}[theorem]{Proposition}
    \newtheorem{corollary}[theorem]{Corollary}
\theoremstyle{definition} 
    \newtheorem{definition}[theorem]{Definition}
    \newtheorem{remark}[theorem]{Remark}
\title{The directed landscape seen from its trees}
\author{Mustazee Rahman \and B\'alint Vir\'ag}
\date{}
\begin{document}
\maketitle

\begin{abstract}
    A basic question about the directed landscape is how much of it can be reconstructed simply by knowing the shapes of its geodesics.
    We prove that the directed landscape can be reconstructed from the shapes of its semi-infinite geodesics. In order to show this result,
    we make use of the Busemann process of the directed landscape together with a novel ``coordinate system" and ``differential distance" to capture the behaviour of semi-infinite geodesics.
\end{abstract}

\section{Introduction}

The directed landscape $$\mathcal{L}: \mathbb{R}^4_{\uparrow} \to \mathbb{R}$$
is a random continuous function defined over $\mathbb{R}^4_{\uparrow} = \{(x,s;y,t) \in \mathbb{R}^4: s < t\}$.
Constructed by \cite{DOV}, $\mathcal{L}$ should be thought of as a random signed metric for the plane.
Much like a metric, it admits notions of length and geodesics. This geometric perspective motivates many questions; see for instance the recent works \cite{BB, BGH, Bhatia22, Bhatia23, Bhatia24, Busani, BSS,Dau23, GZ, RV} and references therein for a broader perspective. We refer the reader to \cite{DOV} Definition 1.4 for the definition and basic properties of the directed landscape.

The allure of universality, too, motivates the study of the directed landscape. It stands as the scaling limit of all integrable last passage percolation models, see \cite{DV}, and is expected to be the universal scaling limit of models in the so called KPZ Universality class, including the most natural random planar metric: first passage percolation. Recently, in remarkable progress, $\mathcal{L}$ been shown to be the scaling limit of the KPZ equation by \cite{Wu},  and of ASEP by \cite{ACH} (see also \cite{QS} and \cite{Virag}).

Here we study a reconstruction problem for the directed landscape. It goes as follows. Let $\gamma$ be the geodesic in the directed landscape from $(0,0)$ to $(0,1)$. Is it possible to determine the length of $\gamma$, $\mathcal{L}(0,0;0,1)$, from knowledge of the function $\gamma$ alone? In other words, does the shape of the geodesic determine its length? (One may guess that the length is some kind of a fractal measure of the image of $\gamma$.)

 \cite{Dau21} has shown that it is possible to reconstruct the directed landscape from knowing the shapes of all geodesics. We prove that it is possible to reconstruct the directed landscape from knowing the shapes of all semi-infinite geodesics. It is an interesting question whether one can determine the directed landscape from a single geodesic tree.  \cite{Bhatia24}  announces an upcoming result that the directed landscape can be determined from a single geodesic tree together with the lengths of all geodesics on the tree. 

\section{Main result}

\subsection{The Geodesic Tree}
A semi-infinite geodesic is a continuous function $\gamma : [s, \infty) \to \mathbb{R}$ such that every finite segment $\gamma \mid_{[s,t]}$ is a geodesic in the directed landscape. Semi-infinite geodesics are parametrized by their starting point $p = (\gamma(s),s)$ and direction $\theta = \lim_{t \to \infty} \gamma(t)/t$. One identifies the geodesic with its graph $\{(\gamma(t),t): t \geq s\}$ lying on the plane.

A point $p = (x,t)$ is an \textbf{interior point} of a semi-infinite geodesic if there is an $s < t$ and a geodesic $\gamma : [s, \infty) \to \mathbb{R}$ such that $p = (\gamma(t),t)$. We say that $p$ lies on the geodesic $\gamma$.

The \textbf{geodesic tree} in direction $\theta$ is the collection of all semi-infinite geodesics in direction $\theta$.
This is an uncountable collection, so let us define the object formally.

Let $((x_n,t_n), n \geq 1)$ be an enumeration of $\mathbb{Q}^2$. Consider the following space $\Gamma$. An element of $\Gamma$ is a sequence
$$ (\gamma_1, \gamma_2, \ldots)$$
where each $\gamma_n : [t_n,\infty) \to \mathbb{R}$ is a continuous function with $\gamma_n(t_n) = x_n$.

Let $d_H$ denote the Hausdorff distance for closed subsets of $\mathbb{R}^2$. Given continious functions $\pi_i:[t_i,\infty) \to \mathbb{R}$, $i = 1,2$, define
$$ d_{path}(\pi_1,\pi_2) = \sum_{N=1}^{\infty} \frac{d_H \left ( \{(\pi_1(s),s): s \geq t_1\} \cap [-N,N]^2, \{(\pi_2(s),s): s \geq t_2\} \cap [-N,N]^2 \right)}{2^N}.$$
Now, given $\vec{\gamma}$ and $\vec{\gamma'}$ in $\Gamma$, define
$$ d_{\Gamma}(\vec{\gamma}, \vec{\gamma'}) = \sum_{n=1}^{\infty} \frac{d_{path}(\gamma_n, \gamma'_n)}{2^n}.$$
One can verify that $d_{\Gamma}$ is a metric on $\Gamma$ and that $\Gamma$ is a Polish space in this metric.
Let $\mathcal{B}_{\Gamma}$ be the associated Borel $\sigma$-algebra.

The directed landscape is a random variable
\begin{equation} \label{eqn:dlrv} \mathcal{L} : (\Omega, \mathcal{F}, \mathbb{P}) \to C(\mathbb{R}^4_{\uparrow}, \mathbb{R})\end{equation}
where $C(\mathbb{R}^4_{\uparrow}, \mathbb{R})$ is the space of continuous function from $\mathbb{R}^4_{\uparrow} \to \mathbb{R}$ in the (metrizable) topology of uniform convergence over compacts.
Let $\mu$ denote the law of $\mathcal{L}$, which is a probability measure on $C(\mathbb{R}^4_{\uparrow}, \mathbb{R})$ with its associated Borel $\sigma$-algebra.
Let $S$ be the (closed) support of $\mu$. If $\mathcal{B}_S$ is the Borel $\sigma$-algebra over $S$, then $(S, \mathcal{B}_S, \mu)$ is a probability space.

The geodesic tree in direction $\theta \in \mathbb{R}$ is a random variable
\begin{equation} \label{eqn:geotreedef} \mathrm{GT}_{\theta}: (S, \mathcal{B}_S, \mu) \to (\Gamma, \mathcal{B}_{\Gamma}).\end{equation}
It assigns to each $(x_n,t_n) \in \mathbb{Q}^2$ the unique $\theta$-directed geodesic $\gamma_n$ of $\mathcal{L}$ from $(x_n,t_n)$.
The reason this is a random variable is because the mapping $s \in S \mapsto \mathrm{GT}_{\theta}(s)$ is continuous.
This follows from the way geodesics are constructed, see \cite[Theorem 3.12]{RV}.

So the geodesic tree $\mathrm{GT}_{\theta}$ is now defined formally. To work with it we need access to the interior points of all $\theta$-directed geodesics.
Suppose $\gamma :[s,\infty) \to \mathbb{R}$ is an arbitrary $\theta$-directed geodesic and $p = (\gamma(t), t)$ for $t > s$ is an interior point.
By \cite{Bhatia23}, Lemma 21 (part 2), there exists $(x_n,t_n) \in \mathbb{Q}^2$ such that $t_n < t$ and $p = (\gamma_n(t),t)$.
Therefore, the interior points of all $\theta$-directed geodesics are determined from the random variable $\mathrm{GT}_{\theta}$ above.
We say a point $q \in \mathrm{GT}_{\theta}$ if $q \in \mathbb{R}^2$ is an interior point of some $\theta$-directed geodesic.

\begin{theorem} \label{thm:main}
   Consider a sequence of directions $\theta^+_n \to + \infty$ and $\theta^-_n \to -\infty$ as $n \to \infty$. Suppose for every $n$, we sample the geodesic tree of the directed landscape for both directions $ \theta^{\pm}_n$. We can reconstruct the directed landscape from all these samples in the sense that $\mathcal{L} $ is a measurable function of the sampled trees.
   
   In other words, there is a function $F : \Gamma^{\mathbb{N}} \to C(\mathbb{R}^4_{\uparrow}, \mathbb{R})$ which is measurable (with respect to the product $\sigma$-algebra induced by $\mathcal{B}_{\Gamma}$ and the Borel $\sigma$-algebra on the target) such that 
   \begin{equation} \label{eqn:F} F(\mathrm{GT}_{\theta_1^+}(s), \mathrm{GT}_{\theta_1^-}(s), \mathrm{GT}_{\theta_2^+}(s), \mathrm{GT}_{\theta_2^-}(s), \cdots) = s\end{equation}
   for $\mu$-almost every $s \in S$ (see \eqref{eqn:dlrv} and \eqref{eqn:geotreedef}).
\end{theorem}

\begin{remark}
    The geodesic tree has been studied in the papers \cite{Bhatia23, BSS, RV}. One caveat is that identifying  $\mathrm{GT}_{\theta}$ as a random subset of the plane consisting of all interior points raises some delicate measurability issues, as one needs to define a $\sigma$-algebra on the space of planar subsets. Another way to define the geodesic tree is to identify it as a random variable on the space of subsets of paths, with the path space suitably compactified. This is how the Brownian web is defined (see \cite{FINR}).
\end{remark} 

\section{The restricted landscape distance}

Let $\theta_1 < \theta_2$ be two directions. The basic idea in what follows is to capture a notion of a coordinate system using geodesics in directions $\theta_1$ and $\theta_2$, and to use said coordinates to decompose geodesics.

\begin{definition} A path  $\pi : [s,t] \to \mathbb{R}$  is called $\theta_1-\theta_2$ wedged if for every $r\in [s,t]$ and $i=1,2$, there are semi-infinite geodesics $\gamma_{i,r}$ in direction $\theta_i$ from $(\pi(r),r)$ so that 
$\gamma_{1,r}(\tau)\le \pi(\tau)\le \gamma_{2,r}(\tau)$ for all $\tau\in [r,t]$.
\end{definition}

\begin{definition}
A $\theta_1-\theta_2$ path is a path $\pi : [s,t] \to \mathbb{R}$ with the following property. The path $\pi$ is a finite union of segments where each segment is part of either a geodesic with direction $\theta_1$ or a geodesic with direction $\theta_2$.
More formally, there exists an $n \geq 1$ and times $s = t_0 < t_1 < \ldots < t_n = t$ such that $\pi$ restricted to $(t_i,t_{i+1}]$ is on a geodesic in direction $\theta_1$ or $\theta_2$, alternating with the parity of $i$.
\end{definition}

The length of a $\theta_1-\theta_2$ path with respect to the directed landscape is the sum of the lengths of its segments (this is easily deduced from the definition of length).

Now we define a restricted distance with respect to the directional parameters $\theta_1$ and $\theta_2$. This distance captures the length of paths as they can be decomposed in the above ``coordinates". 

Let $p = (x,s)$ and $q = (y,t)$ be two points with $s < t$. Set
\begin{equation} \label{eqn:Ltheta12}
    \mathcal{L}^{\theta_1, \theta_2}(p;q) = \sup_{\pi} \; ||\pi||_{\mathcal L}
\end{equation}
where the supremum is over all $\theta_1-\theta_2$ paths $\pi$ from $p$ to $q$.
The length, $||\pi||_{\mathcal L}$, of $\pi$ is with respect to the directed landscape $\mathcal{L}$.
If there are no such paths, set $\mathcal{L}^{\theta_1, \theta_2}(p;q) = -\infty$.
It is clear that $\mathcal{L}^{\theta_1, \theta_2}(p;q) \leq \mathcal{L}(p;q)$.
If there exists a geodesic from $p$ to $q$ that is a $\theta_1-\theta_2$ path, then equality holds.

We may define the length of a path with respect to the distance $\mathcal L^{\theta_1, \theta_2}$ analogous to the length for $\mathcal L$. If $\pi: [s,t] \to \mathbb{R}$ is a path then
$$ ||\pi||_{\mathcal{L}^{\theta_1, \theta_2}} = \inf_{\text{\it all\;partitions}}\; \sum_{i=1}^n L^{\theta_1, \theta_2}\big(\pi(t_i),t_i; \pi(t_{i-1}), t_{i-1}\big)$$
where the infimum is over all partitions $s = t_0 < t_1 < \cdots < t_n = t$ of $[s,t]$.
The path $\pi$ is a $\mathcal L^{\theta_1, \theta_2}$-geodesic if for all $t_1 < t_2 < t_3$ in $[s,t]$, it holds
that $\mathcal L^{\theta_1, \theta_2}(z_1;z_3) = \mathcal L^{\theta_1, \theta_2}(z_1;z_2) + \mathcal L^{\theta_1, \theta_2}(z_2;z_3)$ where $z_i = (\pi(t_i),t_i)$. In this case, $|| \pi ||_{\mathcal L^{\theta_1, \theta_2}} = \mathcal L^{\theta_1, \theta_2}(\pi(s),s; \pi(t),t)$.

As we mentioned above, the distance $\mathcal L^{\theta_1, \theta_2}$ is used to capture the notion of distances between points using a ``coordinate system" of $\theta_1$ or $\theta_2$-directed geodesics. A key observation is that for geodesics in any intermediate direction between $\theta_1$ and $\theta_2$, this system does capture $\mathcal L$-distances faithfully. This is the content of upcoming Proposition \ref{prop:alternatingpath}.

\begin{definition}
    A point $p$ is a forward $k$-star if there are $k$ geodesics emanating from $p$ that are disjoint except for at $p$.
    In other words, there is an $\epsilon > 0$ and geodesics $\gamma_1, \ldots, \gamma_k : [s,s+\epsilon] \to \mathbb{R}$ such that $\gamma_i(t) \neq \gamma_j(t)$
    for every $i,j$ and $t \in (s,s+\epsilon]$, while $(\gamma_i(s),s) = p$ for every $i$.
    
    Similarly, $q$ is a backward $k$-star if there are $k$ geodesics emanating into $q$ that are disjoint except for at $q$.
\end{definition}

The following lemma gathers some facts about $\mathcal{L}$-geodesics from \cite{Dau23}.
\begin{lemma} \label{lem:Dau}
   The following properties of $\mathcal{L}$ hold almost surely.
   \begin{description}
       \item[(i)] Let $\pi_i : [s_i,t_i] \to \mathbb{R}$ for $i=1,2$ be geodesics of $\mathcal{L}$. Define the overlap $O(\pi_1,\pi_2) = \{u \in \mathbb{R}: \pi_1(u) = \pi_2(u)\}$.
       If $O(\pi_1,\pi_2) \neq \emptyset$, there exists $s \leq t$ such that $O = [s,t] \cup S$ where $|S| \leq 2$ and $S \subset \{s_1,s_2,t_1,t_2\}$.
       \item[(ii)] Let $\pi : [s,t] \to \mathbb{R}$ be a geodesic of $\mathcal{L}$. For every $r \in (s,t)$, $(\pi(r),r)$ is not a forward 3-star.
       \item[(iii)] Let $\pi_n :[s_n,t_n] \to \mathbb{R}$ and $\pi : [s,t] \to \mathbb{R}$ be geodesics of $\mathcal{L}$. Let $\Gamma_n = \{(\pi_n(u),u): u \in [s_n,t_n]\}$
       and $\Gamma = \{(\pi(u),u): u \in [s,t]\}$. Suppose $s_n,t_n$ are bounded over $n$. Then $\Gamma_n \to \Gamma$ in the Hausdorff metric
       if and only if $\mathrm{length}(O(\pi_n,\pi)) \to t-s$.
   \end{description}
\end{lemma}

\begin{proof}
    Part (i) is \cite[Lemma 3.3]{Dau23}. Part(ii) is \cite[Lemma 5.5]{Dau23}. Part (iii) is \cite[Proposition 3.5]{Dau23}.
\end{proof}

\begin{proposition} \label{prop:alternatingpath}
    Let $\theta_1< \theta_2$. Let $\gamma:[s,t]\mapsto \mathbb R$ be a geodesic so that 
    \begin{enumerate}
        \item $\gamma$ is $\theta_1-\theta_2$-wedged, 
        \item $p =(\gamma(s),s)$ is not a forward 3-star,
    \item  $q=(\gamma(t),t)$ is not a backward 2-star and 
    \item $q \in \mathrm{GT}_{\theta_1} \cup \mathrm{GT}_{\theta_2}$.
    \end{enumerate}
Then $\gamma$ is a $\theta_1-\theta_2$ path. In particular,
    $$\mathcal{L}^{\theta_1, \theta_2}(p;q) = \mathcal{L}(p;q).$$
\end{proposition}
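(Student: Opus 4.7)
The plan is to decompose $\gamma$ into finitely many alternating $\theta_1$- and $\theta_2$-geodesic segments. I would anchor the decomposition at the initial endpoint $p$ using condition (2), propagate it forward at each switching time using the wedging hypothesis (1), and close the process off at $q$ using conditions (3) and (4). The equality $\mathcal L^{\theta_1\theta_2}(p;q) = \mathcal L(p;q)$ will then follow from the observation, noted just after the definition of $\mathcal L^{\theta_1\theta_2}$, that a geodesic from $p$ to $q$ which is a $\theta_1$--$\theta_2$ path realizes the supremum defining $\mathcal L^{\theta_1\theta_2}(p;q)$.

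\emph{Base case at $p$.} The wedging at $p$ produces $\gamma_{1,p}$ and $\gamma_{2,p}$ with $\gamma_{1,p}(\tau)\le \gamma(\tau)\le \gamma_{2,p}(\tau)$ on $[s,t]$. If $\gamma$ coincided with neither $\gamma_{i,p}$ on any initial interval $[s,s+\eta]$, then a short enough truncation of the three curves would be pairwise disjoint except at $p$ (since all three are continuous and any two that shared a nontrivial initial arc would coalesce there by uniqueness), producing a forward $3$-star at $p$ and contradicting (2). Hence $\gamma$ coincides with some $\gamma_{j_1,p}$ on a maximal initial interval $[s,R_1]$ with $j_1\in\{1,2\}$.

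\emph{Inductive switching step.} At $(\gamma(R_k),R_k)$, coalescence of $\theta_{j_k}$-directed geodesics identifies the wedging geodesic $\gamma_{j_k,R_k}$ with the previously-followed segment past $R_k$, and maximality of $R_k$ forces $\gamma$ to leave $\gamma_{j_k,R_k}$ immediately after $R_k$. The next task is to show $\gamma$ coincides with $\gamma_{j_{k+1},R_k}$ (where $j_{k+1}=3-j_k$) on some interval $[R_k,R_k+\eta]$. This is the main obstacle: condition (2) does not directly protect intermediate switching points, so the local $3$-star argument at $(\gamma(R_k),R_k)$ is not available by hypothesis. The plan is to globalize: if $\gamma$ were strictly between $\gamma_{1,R_k}$ and $\gamma_{2,R_k}$ just after $R_k$, concatenate the established segment $\gamma|_{[s,R_k]}$ with each of $\gamma_{1,R_k}$, $\gamma$, $\gamma_{2,R_k}$, and then reroute near $p$ along $\gamma_{j_{k+1},p}$ up to its coalescence with $\gamma_{j_{k+1},R_k}$, exhibiting three geodesics out of $p$ that are disjoint except at $p$. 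This recovers a forward $3$-star at $p$ and contradicts (2), pushing the $3$-star obstruction back to the one place the hypothesis rules it out.

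\emph{Anchor at $q$ and termination.} By (4), $q$ is an interior point of some semi-infinite $\theta_j$-geodesic $\beta$. I would show that $\gamma$ coincides with $\beta$ on some $[t-\delta,t]$: otherwise, picking $r_n\uparrow t$ with $\gamma(r_n)\ne \beta(r_n)$, and using that any two geodesics into $q$ which met at an interior time would coalesce backward to that time by uniqueness of the geodesic between their common point and $q$, one extracts two geodesics into $q$ disjoint except at $q$, contradicting (3). This pins down a final $\theta_j$-segment. Finally, the switching sequence produced by iterating the inductive step must be finite because each switch corresponds to a genuine corner between $\theta_1$- and $\theta_2$-tracking, and on the compact time interval $[s,t]$ only finitely many arcs of $\mathrm{GT}_{\theta_1}$ and $\mathrm{GT}_{\theta_2}$ can appear in $\gamma$ by compactness together with the coalescence structure of the two trees; consecutive same-direction segments are merged to produce the alternating decomposition required by the definition.
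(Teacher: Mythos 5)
Your overall strategy (a greedy alternating decomposition anchored at $p$ and terminated at $q$) matches the paper's, but two key steps in your proposal are either incorrect or substantially incomplete.

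First, your ``inductive switching step'' misdiagnoses the difficulty and then proposes a fix that does not work. You worry that at an intermediate switching time $(\gamma(R_k),R_k)$ the non-$3$-star hypothesis is unavailable, and try to rescue the argument by rerouting a concatenated path back to $p$. But this rerouting does not obviously produce three geodesics from $p$ that are disjoint except at $p$: there is no reason the $\theta_{j_{k+1}}$-geodesic from $p$ should coalesce with the one from $(\gamma(R_k),R_k)$ in a way that keeps your three candidate paths disjoint, and the concatenated curves you build are not even geodesics unless you verify additivity of lengths. The paper does not need any such maneuver because the relevant input is a known structural fact: by the classification of geodesic networks in \cite{Dau23}, no interior point of any geodesic is a forward $3$-star. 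Intermediate switching times are interior points of $\gamma$, so the local argument applies there for free. Relatedly, in your ``base case'' the jump from ``$\gamma$ coincides with neither $\gamma_{i,p}$ on any initial interval'' to ``a short enough truncation of the three curves is pairwise disjoint'' is unsubstantiated: two geodesics can fail to coincide on an interval while still intersecting at times arbitrarily close to $s$. The paper's step here is the \cite{Dau23} classification of intersections (an interval plus at most two extra points), which converts ``infinitely many intersections'' into ``agreement on an interval.'' Your parenthetical about ``coalescence by uniqueness'' does not do this work.

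Second, your termination argument is the place where real content is missing, and it is where conditions (3) and (4) actually earn their keep. You assert that only finitely many arcs can appear ``by compactness together with the coalescence structure,'' but this is exactly the statement that must be proved; it does not follow from continuity or compactness of $[s,t]$ alone. In the paper, one lets $t_\infty\le t$ be the limit of the switching times and rules out accumulation in two cases. If $t_\infty<t$, one takes subsequential limits $\lambda_\infty,\rho_\infty$ of the leftmost/rightmost geodesics, argues (again via ``interior points are not $3$-stars'') that $\gamma$ must overlap one of them past $t_\infty$, and then uses that geodesic convergence implies overlap convergence (\cite[Proposition 3.5]{Dau23}) to contradict the maximality of the chosen switching times. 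If $t_\infty=t$, one uses condition (4) to produce a $\theta_i$-geodesic $\lambda'$ through $q$, condition (3) to ensure $\gamma=\lambda'$ on a nondegenerate terminal interval, and then coalescence of same-direction geodesics agreeing on an interval to conclude the greedy construction would have jumped directly to $t$, a contradiction. Your proposal never confronts the $t_\infty<t$ case at all, and treats $q$ only as a ``final anchor'' rather than as the tool that excludes accumulation at the endpoint. As written, the finiteness of the decomposition -- the heart of the proposition -- is assumed, not proved.
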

\begin{remark}
    When $\gamma$ is a finite segment of a geodesic in a direction $\theta \in [\theta_1,\theta_2]$, condition 1 automatically applies by geodesic ordering. 
\end{remark}

\begin{proof}
We will construct the times $t_1,t_2, \ldots, t_n $ as in the definition of a $\theta_1-\theta_2$ path by the greedy algorithm. 

To find $t_1$, consider the leftmost path $\lambda_0$ in $\mathrm{GT}_{\theta_1}$ from $p$ and the rightmost path $\rho_0$ in $\mathrm{GT}_{\theta_2}$ from $p$. Such paths exists by \cite[Corollary 3.20]{RV}.

Since $p$ is not a $3$-star, one of these path must overlap with $\gamma$ until some maximal time $t_1>s$.
That is, $\gamma \mid_{[s,t_1]} = \lambda_0 \mid_{[s,t_1]}$ or $\gamma \mid_{[s,t_1]} = \rho_0 \mid_{[s,t_1]}$.

This is because of the following. First of all, since $\gamma$ is $\theta_1-\theta_2$ wedged,  $\lambda_0 \le \gamma \le \rho_0$. The three paths are not disjoint on any interval $(s,s+\epsilon)$ since $p$ is not a forward $3$-star. Therefore $\gamma$ has an infinite set of intersections with either $\lambda_0$ or $\rho_0$ in $(s,s+\epsilon)$ for every $\epsilon >0$.  Assume w.l.o.g. that it is with $\lambda_0$. Let $O = \{u \geq s: \lambda_0(u) = \gamma(u) \}$.
By Lemma \ref{lem:Dau}(i), $O$ has the form $O = [s,s']\cup S$ where $|S| \leq 2$ and $s' \geq s$.
Thus, $\gamma$ and $\lambda_0$ must agree on some interval $[s,t_1]$ with maximal $t_1>s$, that is, $s' = t_1 > s$. 
Moreover, either $t_1=t$, or $\gamma$ and $\lambda_0$ are disjoint after time $t_1$. 

To find $t_2$, consider the leftmost path $\lambda_1$ in $\mathrm{GT}_{\theta_1}$ from $(\gamma(t_1),t_1)$ and the rightmost path $\rho_1$ in $\mathrm{GT}_{\theta_2}$ from $(\gamma(t_1),t_1)$. Note that $\lambda_1$ is just a restriction of $\lambda_0$ to $[t_1,\infty)$.
 
No internal point of a geodesic is a $3$-star by Lemma \ref{lem:Dau}(ii). So $\gamma$ and $\rho_1$ must have infinitely many intersections on every interval $(t_1,t_1+\epsilon)$. By the same argument as before, there is $t_2>t_1$ such that $\gamma=\rho_1$ on $[t_1,t_2]$. 

Alternating this construction, we either reach the time $t$ in a finite number of steps, or we get an infinite increasing sequence of times $t_i$ converging to some limit $t_\infty\le t$. We will show the latter does not happen. 

Let $\lambda_i$,  $\rho_i$ denote the infinite geodesics arising in this construction.
By compactness \cite[Lemma 14]{Bhatia23}, they admit subsequential limits $\lambda_{\infty}$ and $\rho_{\infty}$ (in the Hausdorff metric for their graphs). These are geodesics defined over $[t_{\infty}, \infty)$ in directions $\theta_1$ and $\theta_2$, respectively. The geodesic $\gamma$ is wedged between them.

If $t_\infty < t$ then we have a contradiction by considering the limiting geodesics $\lambda_{\infty}$ and $\rho_{\infty}$.
Consider the overlaps $O(\gamma, \lambda_{\infty} \mid_{[t_{\infty},t]})$ and $O(\gamma, \rho_{\infty} \mid_{[t_{\infty},t]})$ as in Lemma \ref{lem:Dau}.
If the overlaps equal $\{t_{\infty}\}$ then we find an interior forward $3$-star, which is a contradiction to Lemma \ref{lem:Dau}(ii).
Suppose $\gamma$ overlaps $\lambda_{\infty}: [t_{\infty}, t'] \subseteq O(\gamma, \lambda_{\infty} \mid_{[t_{\infty},t]})$ for some $t' > t_{\infty}$ by Lemma \ref{lem:Dau}(i).
Since geodesic convergence implies overlap convergence, Lemma \ref{lem:Dau}(iii), some $\lambda_i$ overlaps with $\gamma$ after time $t_{\infty}$, which is a contradiction to the way we chose the times to be maximal.

If $t_\infty = t$, we use the fact that $q \in \mathrm{GT}_{\theta_1}\cup \mathrm{GT}_{\theta_2}$ to get a contradiction. W.l.o.g. assume that it is in the first tree. Then it is an internal point on a geodesic $\lambda'$ from some $q'$ in direction $\theta_1$. Since $q$ is not a backward $2$-star, the maximal time interval $[t^-,t]$ on which $\gamma=\lambda'$ satisfies $t^-<t$. In particular, it contains an interval $[t_i,t_{i+1}]$ in which $\gamma$ agrees with the leftmost geodesic $\lambda_i$ in direction $\theta_1$ from $(\gamma(t_i),t_i)$. Now $\lambda_i$ and $\lambda'$ are both geodesics in direction $\theta_1$, and they agree on a time interval $[t_i,t_{i+1}]$ of positive length, so they must agree on $[t_i,\infty)$. Hence by our construction, we would have picked $t_{i+1}=t$, a contradiction. 
\end{proof}

\begin{proposition} \label{prop:geotreeunion}
   Let $\theta_1 \leq \theta \leq \theta_2$, and let $\gamma$ be a geodesic in direction $\theta$ from a point $p=(x,s)$ that is not a forward $3$-star. Then for any $q=(\gamma(t),t)$ with $t>s$, the $pq$-geodesic $\gamma|_{[s,t]}$ is a $\theta_1-\theta_2$-path, and $$\|\gamma|_{[s,t]}\|_\mathcal{L}=\mathcal{L}(p;q) = \mathcal{L}^{\theta_1, \theta_2}(p;q).$$
\end{proposition}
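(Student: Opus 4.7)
The plan is to rerun the greedy algorithm from the proof of Proposition \ref{prop:alternatingpath}, replacing the use of hypotheses (3) and (4) on $q$ with the semi-infiniteness of $\gamma$ in the limiting case.

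Hypothesis (1) of Proposition \ref{prop:alternatingpath} holds automatically by the remark following it, since $\theta \in (\theta_1, \theta_2)$ gives geodesic ordering at every point of $\gamma$; hypothesis (2) is given. I build $s = t_0 < t_1 < \cdots$ greedily as in the original proof: at step $i$, take $\lambda_i$ and $\rho_i$ to be the leftmost $\theta_1$- and rightmost $\theta_2$-directed geodesics from $(\gamma(t_i), t_i)$, and let $t_{i+1}$ be the maximal time on which $\gamma$ coincides with one of them. The alternation is forced because each $(\gamma(t_i), t_i)$ is an interior point of $\gamma$ and hence not a forward $3$-star by \cite[Lemma 3.3]{Dau23}. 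Either this terminates with some $t_n = t$, whence $\gamma|_{[s,t]}$ is by construction a $\theta_1-\theta_2$-path, or $t_i \uparrow t_\infty \le t$.

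The case $t_\infty < t$ is ruled out verbatim as in the original proof. For the new case $t_\infty = t$, the key ingredient is that $\gamma$ is semi-infinite, so it extends past $t$, making $q = (\gamma(t), t)$ an interior point of $\gamma$ and hence not a forward $3$-star. Pass to subsequential limits $\lambda_\infty, \rho_\infty$ of $\lambda_i, \rho_i$ (existing by \cite[Lemma 14]{Bhatia23}); these are $\theta_1$- and $\theta_2$-directed geodesics from $q$. The overlap of $\lambda_i$ with $\gamma$ is contained in $[t_i, t_{i+1}]$, whose endpoints both converge to $t$, so by overlap convergence \cite[Proposition 3.5]{Dau23}, $\lambda_\infty$ and $\gamma$ share no positive-length interval past $t$. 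Likewise for $\rho_\infty$. But $q$ not being a $3$-star forces some pair in $\{\gamma, \lambda_\infty, \rho_\infty\}$ to intersect infinitely often on every $(t, t+\epsilon)$, hence to overlap on a positive-length interval $[t, t^*]$ by the classification; the only remaining possibility is that $\lambda_\infty$ overlaps $\rho_\infty$ there, which by wedging forces $\gamma$ to coincide with them on $[t, t^*]$ as well, contradicting the previous step.

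Hence the algorithm terminates in finitely many steps, so $\gamma|_{[s,t]}$ is a $\theta_1-\theta_2$-path of $\mathcal{L}$-length $\mathcal{L}(p;q)$. This gives $\mathcal{L}^{\theta_1 \theta_2}(p;q) \ge \mathcal{L}(p;q)$; together with the general bound $\mathcal{L}^{\theta_1 \theta_2} \le \mathcal{L}$, the stated equalities follow. The main obstacle is the $t_\infty = t$ analysis, where one must leverage the semi-infinite extension of $\gamma$ to obtain the no-forward-$3$-star property at $q$ in place of hypotheses (3) and (4) of Proposition \ref{prop:alternatingpath}.
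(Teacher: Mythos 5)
Your proof is correct, but it takes a different route from the paper's. The paper does not rerun the greedy algorithm at all; instead it exploits semi-infiniteness \emph{before} invoking Proposition \ref{prop:alternatingpath}, by constructing a later point $q'=(\gamma(t'),t')$ with $t'>t$ that satisfies hypotheses (3) and (4) of that proposition. Concretely, it uses the Hausdorff-dimension-$1/3$ bound on star points along $\gamma$ from \cite[Theorem 4]{Bhatia22} to pick a non-star point $q''$ on $\gamma$ past $t$, observes that a $\theta_1$-geodesic from $q''$ must coincide with $\gamma$ on an interval of positive length, and then selects $q'$ from that interval avoiding the star set again, so that $q'\in\mathrm{GT}_{\theta_1}$ and $q'$ is not a backward $2$-star. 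Proposition \ref{prop:alternatingpath} then gives that $\gamma|_{[s,t']}$ is a $\theta_1$--$\theta_2$ path, and restricting to $[s,t]$ finishes. Your approach instead repeats the greedy construction from scratch and, in the new $t_\infty=t$ endgame, replaces hypotheses (3)--(4) with the observation that $q$ is an interior point of the semi-infinite $\gamma$ and hence not a forward $3$-star, then combines overlap convergence with the geodesic-network classification to rule out the shrinking-overlap scenario. Both arguments ultimately lean on the same semi-infiniteness of $\gamma$, but the paper's is a short reduction to the prior proposition, trading your explicit limiting-overlap analysis for the dimension bound on star points; your version avoids that input at the cost of carefully tracking the overlap intervals $[t_i,t_{i+1}]\to\{t\}$ and verifying $\lambda_\infty\le\gamma\le\rho_\infty$ in the limit.
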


\begin{proof}
    
    We may find a point $q' = (\gamma(t'),t')$ for $t' > t$ such that (i) $q' \in \mathrm{GT}_{\theta_1} \cup \mathrm{GT}_{\theta_2}$, and (ii) $q'$ is not a backward 2-star. Indeed, let $S$ be the set of star points in $\gamma$, which has Hausdorff dimension 1/3 \cite[Theorem 4]{Bhatia22}. Choose a point $q''$ in $\gamma$ with time coordinate larger than $t$ and which does not belong to $S$. Consider a geodesic $\gamma''$ in direction $\theta_1$ from $q''$. The intersection of $\gamma''$ with $\gamma$ must be an interval of positive length because $q''$ is not a star. Choose $q'$ to be any point from this interval that is not in $S$.

    Since $p$ is not a forward 3-star, by Proposition \ref{prop:alternatingpath}, $\gamma|_{[s,t']}$ is a $\theta_1-\theta_2$ path, and so is its restriction $\gamma|_{[s,t]}$. Thus, by definition, 
    $$\mathcal{L}(p;q) =\|\gamma|_{[s,t]}\|_{\mathcal L}= \mathcal{L}^{\theta_1, \theta_2}(p;q).$$
\end{proof}

\begin{corollary} \label{cor:theta}
Let $(\theta_1', \theta_2')\subseteq (\theta_1,\theta_2)$, and assume that $p$ is not a forward $3$-star. Then any  $\theta_1'-\theta_2'$ path from $p$
 is also a $\theta_1-\theta_2$ path. 
\end{corollary}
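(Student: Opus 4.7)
The plan is to take the defining decomposition of the given $\theta_1'-\theta_2'$ path $\pi:[s,t]\to\mathbb{R}$, apply Proposition \ref{prop:geotreeunion} to each piece separately, and concatenate the resulting $\theta_1-\theta_2$ decompositions.

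Let $s=t_0<t_1<\dots<t_n=t$ be the breakpoints witnessing $\pi$ as a $\theta_1'-\theta_2'$ path, and let $\eta_i$ denote a semi-infinite geodesic in a direction $\theta^{(i)}\in\{\theta_1',\theta_2'\}$ along which $\pi|_{(t_i,t_{i+1}]}$ travels. Since $\theta^{(i)}\in(\theta_1,\theta_2)$, the restriction $\eta_i|_{[t_i,\infty)}$ is a geodesic in an intermediate direction starting at $(\pi(t_i),t_i)$ and passing through $(\pi(t_{i+1}),t_{i+1})$, so it fits the setup of Proposition \ref{prop:geotreeunion} provided the starting point is not a forward $3$-star.

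That non-$3$-star condition is the one real verification. When $i=0$ it is the standing hypothesis on $p$. When $i\geq 1$, the point $(\pi(t_i),t_i)$ lies strictly in the interior of $\eta_{i-1}$, since $\eta_{i-1}$ contains $\pi|_{(t_{i-1},t_i]}$ and extends all the way to $+\infty$; the classification of geodesic networks from \cite{Dau23}, invoked in the proof of Proposition \ref{prop:alternatingpath}, then rules out forward $3$-stars at interior points of geodesics. Proposition \ref{prop:geotreeunion} thus applies to each $\pi|_{[t_i,t_{i+1}]}$ and produces a decomposition of it into $\theta_1$- and $\theta_2$-geodesic segments.

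Concatenating these $n$ decompositions exhibits $\pi$ as a finite union of segments each lying on a geodesic in direction $\theta_1$ or $\theta_2$. The one cosmetic obstacle is the alternating-parity clause in the definition of a $\theta_1-\theta_2$ path: at a junction $t_i$ the last piece of one block and the first piece of the next may both be in the same direction. I would handle this by merging any two codirectional pieces meeting at a common point into a single piece, using coalescence of semi-infinite geodesics in a fixed direction from a common point (with a leftmost/rightmost representative chosen if the junction happens to be a star for that direction). After this cleanup the decomposition is strictly alternating and $\pi$ is a $\theta_1-\theta_2$ path. The only substantive input is the interior-point-not-a-forward-$3$-star fact, which is imported directly from the proof of Proposition \ref{prop:alternatingpath}; the remainder is bookkeeping.
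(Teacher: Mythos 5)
Your proposal is correct and takes essentially the same route as the paper: decompose $\pi$ into its $\theta_1'-\theta_2'$ segments, apply Proposition~\ref{prop:geotreeunion} to each, and feed in that the first starting point is not a forward $3$-star by hypothesis while the later ones are interior points and hence never forward $3$-stars by the classification in \cite{Dau23}. Your additional remark about restoring strict alternation at the block junctions addresses a detail the paper's proof leaves implicit, and is a reasonable (if slightly underspecified) piece of bookkeeping rather than a divergence in approach.
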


\begin{proof}
Recall that such paths $\pi$ are alternating between geodesic segments in the direction $\theta_1'$ and $\theta_2'$. Such segments are themselves $\theta_1-\theta_2$ path as long as their initial point $p_i$ is not a forward $3$-star. This is true for the first segment by assumption, while the other $p_i$ are interior points that are never forward 3-stars by \cite{Dau23}. 
\end{proof}
 
   For the next part of the argument, we will need to consider competition interfaces.
   For $q=(y,t)\in \mathbb R^2$ and $\theta\in \mathbb R$, let $I_{q,\theta}:[-\infty,t]\to\mathbb R$ be a competition interface from $q$ in direction $\theta$.
   See \cite[Lemma 31]{Bhatia23} for the construction of this interface (see also \cite[Lemma 4.20]{RV}). Roughly speaking, $I_{q,\theta}$ is a path
   from $q$ moving downwards with direction $\theta$ such that it does not intersect $\mathrm{GT}_{\theta}$ after $q$.
   
   \begin{lemma} \label{lem:interface}
   Almost surely, for every $q=(y,t)$ and $r<t$, we have 
   $$
   \lim_{\theta\to \infty} \;\;\sup_{(-\infty ,r]}I_{q,\theta}=-\infty. 
   $$
   \end{lemma}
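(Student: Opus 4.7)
The plan is to bound $I_{q,\theta}$ from above by combining a drift estimate on forward $\theta$-directed geodesics (controlling the interface on a compact time interval) with a monotonicity-based tail bound (controlling the interface for very negative times). The heart of the argument is that for large $\theta$, any $\theta$-directed forward semi-infinite geodesic starting at $(x', u)$ with $u \leq r < t$ drifts so strongly to the right that its time-$t$ position is far above $y$; this forces the interface value $I_{q,\theta}(u)$, which sits at the border of such geodesics, to be far to the left as well.

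More concretely, at each $u < t$ the value $x = I_{q,\theta}(u)$ is characterized by the property that the $\theta$-directed forward semi-infinite geodesic from $(x+\epsilon, u)$ passes above $y$ at time $t$ while the one from $(x-\epsilon, u)$ passes below it, by the construction in \cite[Lemma 31]{Bhatia23}. I would combine this with the drift estimate that for any $U>0$, once $\theta$ is large enough, every $\theta$-directed semi-infinite geodesic $\gamma$ from any $(x', u)$ with $u \in [-U, r]$ satisfies $\gamma(t) - x' \geq \tfrac{1}{2}\theta(t-u)$. This estimate should follow from standard concentration and moderate deviation bounds on directed landscape geodesics (as in \cite{Dau23, DV}), with uniformity in $x'$ coming from spatial stationarity and uniformity in $u \in [-U, r]$ from compactness together with the fluctuation scaling $O((t-u)^{2/3})$. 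Substituting into the interface characterization yields $I_{q,\theta}(u) \leq y - \tfrac{1}{2}\theta(t-r)$ uniformly for $u \in [-U, r]$.

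For $u < -U$, I would exploit monotonicity of $\theta \mapsto I_{q,\theta}(u)$: as $\theta$ grows the interface moves leftward at every fixed time, which follows from the monotonicity of the Busemann process in $\theta$ developed in \cite{RV, Bhatia23}. Consequently $I_{q,\theta}(u) \leq I_{q,1}(u)$ for all $\theta \geq 1$, and $I_{q,1}$ is a single fixed continuous backward path with $I_{q,1}(u) \to -\infty$ as $u \to -\infty$. Hence for any $M > 0$, one first chooses $U = U_M$ so that $I_{q,1}(u) \leq -M$ on $(-\infty, -U_M]$, and then chooses $\theta$ large enough that $y - \tfrac{1}{2}\theta(t-r) \leq -M$. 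This gives $\sup_{(-\infty, r]} I_{q,\theta} \leq -M$, and sending $M \to \infty$ yields the lemma.

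The main obstacle I anticipate is the uniform drift estimate: verifying that $\gamma(t) - x' \geq \tfrac{1}{2}\theta(t-u)$ holds almost surely for every $\theta$-directed semi-infinite geodesic from every starting point in the strip $\mathbb{R} \times [-U, r]$, simultaneously, once $\theta$ is sufficiently large. This must contend with an uncountable family of random geodesics. The standard way around this is to establish the bound on a dense countable subfamily and then transfer to all starting points using monotonicity of geodesics in $x'$ for fixed $\theta$ together with the compactness results for semi-infinite geodesics from \cite[Lemma 14]{Bhatia23}.
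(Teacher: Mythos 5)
The proposal is correct in its overall logic but takes a genuinely different route from the paper. The paper's proof applies the duality between competition interfaces and geodesics: $I_{q,\theta}$ corresponds to a $\theta$-directed geodesic $\gamma_{(-y,-t),\theta}$ in the dual landscape, so the desired statement is a uniform-in-time lower bound on a \emph{single} semi-infinite geodesic, which is exactly what \cite[Theorem 3.12]{RV} delivers; a union bound over rational $q$ and the ordering of interfaces then finish the job. You instead work directly with the interface, decomposing $(-\infty,r]$ into a compact window $[-U,r]$ (controlled by a quantitative drift estimate for $\theta$-geodesics started anywhere in a strip) and a tail $(-\infty,-U]$ (controlled by monotonicity of $I_{q,\theta}$ in $\theta$ together with the fact that the fixed path $I_{q,1}$ diverges to $-\infty$). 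The trade-off: the duality route sidesteps the uniformity issue entirely, since after dualizing there is only one geodesic to control, and the required tail bound is already in the literature; your route avoids invoking duality but then has to carry the burden of an almost sure, uniform drift estimate over an uncountable family of starting points in the strip (which you flag as the ``main obstacle'') and of establishing interface monotonicity in $\theta$ from Busemann monotonicity. Both can be made to work; the paper's is shorter because the dual geodesic picture is precisely what makes the statement a one-geodesic tail bound. You should also record, as the paper does, that after proving the statement for a single fixed $q$ you still need a union bound over rational $q$ plus interface ordering to handle all $q$ simultaneously almost surely; that last quantifier swap is not automatic.
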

   \begin{proof}
    First we prove this for fixed $q$. By translation-invariance of $\mathcal{L}$ we may assume $q=(0,0)$.
      Since $I_{(0,0),\theta}$ is a monotone function of $\theta$ ($I_{(0,0),\theta_1} \geq I_{(0,0),\theta_2}$ when $\theta_1 \leq \theta_2$),
      it suffices to show that along the sequence $\theta = n \to +\infty$,
      $$
      \sup_{(-\infty ,r]}I_{(0,0),n} \to -\infty \quad \text{in probability}.
      $$
      Indeed, convergence in probability will imply convergence almost surely due to the monotonicity.
      The latter convergence depends on the individual law of $I_{(0,0),n}$ for each $n$, and not on how they are coupled over $n$.
      By shear-invariance of the directed landscape,
      $$
      (I_{(0,0), n}(s),s<0)\stackrel{d}{=} (I_{(0,0),0}(s)+n s,s<0)
      $$ 
      We have, for $r < 0$,
      \begin{align*}
      \sup_{s\in (-\infty ,r]} I_{(0,0),0}(s)+n s
      &\le
      \left(\sup_{s\in (-\infty ,r]} I_{(0,0),0}(s)+s\right)+ \left(
      \sup_{s\in (-\infty ,r]} (n-1) s\right)
\\& =   \left(\sup_{s\in (-\infty ,r]} I_{(0,0),0}(s)+ s\right)
       +(n-1) r,
       \qquad n \ge 1
      \end{align*}
The last infimum is finite by \cite[Corollary 4.14]{RV}, which states that $$\lim_{s\to -\infty} \frac{I_{(0,0),0}(s)}{s}= 0.$$
So the first supremum converges to $-\infty$ as $n \to\infty$ since $r < 0$.  
(In the aforementioned Corollary 4.14 we use that the initial condition for the interface $I_{(0,0),0}$
is the Busemann function $W_0(x,0;0,0)$ \eqref{eqn:Busemann} -- a two-sided Brownian motion that grows sublinearly.)

   By a union bound, the statement of the lemma holds for all $q$ with rational coordinates. The rest follows by the ordering of competition interfaces. Indeed, let $q'=(y',t')$ be rational with $r < t' < t$ and $y' > I_{q,\theta}(t')$. Then $I_{q,\theta} \leq I_{q',\theta}$ on $(-\infty,r]$. 
\end{proof}

\begin{proposition}\label{prop:density}
    Let $\gamma:[s,t]\mapsto \mathbb R$ be a geodesic so that $(\gamma(s),s)$ is not a forward 3-star. Then for every $r\in (s,t)$ there exists $\theta_1,\theta_2$ so that $\gamma|_{[s,r]}$ is a $\theta_1-\theta_2$ path. 

Let $U$ be the set of point pairs $(p;q)$ for which $p$ is not a forward 3-star and there is a $pq$-geodesic that is also a $\theta_1-\theta_2$ path for some $\theta_1,\theta_2$. Then $U$ is dense in $\mathbb R_{\uparrow}^4.$
\end{proposition}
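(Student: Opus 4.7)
The plan is to apply Proposition \ref{prop:alternatingpath} to a slightly longer subpath $\gamma|_{[s,r']}$, with $r'\in(r,t)$ chosen so that the forward behaviour at $(\gamma(r'),r')$ is controlled by a single $\theta_1$-directed geodesic through it, and then restrict to $[s,r]$. The density claim then follows by combining Proposition \ref{prop:geotreeunion} with an intermediate-value argument on a $\theta$-directed semi-infinite geodesic from a generic starting point.

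For the first statement, fix $r\in(s,t)$. Using the Hausdorff-dimension $1/3$ bound on the star set along $\gamma$ (\cite{Bhatia22}, Theorem 4), pick a non-star $r_0\in(r,t)$; by the local geodesic tree structure, every forward semi-infinite geodesic from $(\gamma(r_0),r_0)$ agrees with $\gamma$ on some interval $[r_0,r_0+\delta_0]$ with $\delta_0>0$ independent of direction. Pick a non-star $r'\in(r_0,r_0+\delta_0)$ and set $q=(\gamma(r'),r')$. For any directions $\theta_1<\theta_2$, let $\eta_1,\eta_2$ be the leftmost $\theta_1$-directed and rightmost $\theta_2$-directed semi-infinite geodesics from $(\gamma(r_0),r_0)$; each coincides with $\gamma$ on $[r_0,r_0+\delta_0]$, so $q$ lies in the interior of $\eta_1$ (hence $q\in\mathrm{GT}_{\theta_1}$) and is not a backward $2$-star by the non-star choice.

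The main technical step is verifying that $\gamma|_{[s,r']}$ is $\theta_1$-$\theta_2$-wedged for $\theta_1\ll 0\ll\theta_2$. For $r^*\in[r_0,r']$ the restrictions $\eta_i|_{[r^*,\infty)}$ are $\theta_i$-directed geodesics from $(\gamma(r^*),r^*)$ that agree with $\gamma$ on $[r^*,r']$, so wedging is automatic on this portion. For $r^*\in[s,r_0]$ I would invoke Lemma \ref{lem:interface}: by taking $\theta_2$ large we can arrange $\sup_{(-\infty,r_0]}I_{q,\theta_2}<\min_{[s,r_0]}\gamma$, which by the defining property of the competition interface forces the $\theta_2$-geodesic from $(\gamma(r^*),r^*)$ to pass to the right of $q$ at time $r'$; the classification of geodesic intersections (\cite{Dau23}, Lemma 3.3) then yields that this geodesic remains above $\gamma$ on $[r^*,r']$. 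The symmetric argument with $\theta_1\to-\infty$ gives the lower bound. All four hypotheses of Proposition \ref{prop:alternatingpath} now hold, so $\gamma|_{[s,r']}$ is a $\theta_1$-$\theta_2$ path, and restriction to $[s,r]$ gives the conclusion.

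For density, given $(p_0,q_0)=((x_0,s_0),(y_0,t_0))\in\mathbb R^4_\uparrow$ and $\varepsilon>0$, pick $p$ within $\varepsilon$ of $p_0$ that is not a forward $3$-star (dense by \cite{Bhatia22}). For each $\theta\in\mathbb R$ let $\tilde\gamma_\theta$ be the leftmost $\theta$-directed semi-infinite geodesic from $p$. By geodesic ordering the map $\theta\mapsto\tilde\gamma_\theta(t_0)$ is monotone, and by the forward geodesic bound used in the proof of Lemma \ref{lem:interface} it tends to $\pm\infty$ as $\theta\to\pm\infty$; a monotone map has only countably many jumps, so we can choose $\theta$ with $|\tilde\gamma_\theta(t_0)-y_0|<\varepsilon$. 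Setting $q=(\tilde\gamma_\theta(t_0),t_0)$, Proposition \ref{prop:geotreeunion} applied with any $\theta_1<\theta<\theta_2$ shows that the $pq$-geodesic $\tilde\gamma_\theta|_{[s_0,t_0]}$ is a $\theta_1$-$\theta_2$ path, so $(p,q)\in U$ lies within $2\varepsilon$ of $(p_0,q_0)$. The hardest step in the whole argument is the wedging estimate on $[s,r_0]$: Lemma \ref{lem:interface} only controls the interface strictly to the left of time $r'$, which is why the construction of $r'$ via non-star points is essential for handling the sliver $[r_0,r']$ by direct identification with $\eta_i$ rather than by an interface bound.
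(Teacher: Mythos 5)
There is a genuine gap in the first half of your argument. You claim that at the non-star point $(\gamma(r_0),r_0)$ there is a $\delta_0>0$, \emph{independent of direction}, such that every forward semi-infinite geodesic from $(\gamma(r_0),r_0)$ agrees with $\gamma$ on $[r_0,r_0+\delta_0]$. This is false. A point that is not a forward $2$-star only guarantees that each forward geodesic individually overlaps $\gamma$ on some initial interval, but there is no uniform lower bound on the length of that overlap: as $\theta\to+\infty$ (or $-\infty$), the $\theta$-directed geodesic from a fixed point departs from $\gamma$ after a time of order $|\theta|^{-3}$ (KPZ scaling: the drift $\theta\tau$ overtakes the $\tau^{2/3}$ fluctuations as soon as $\tau\gtrsim|\theta|^{-3}$), so the overlap time tends to zero. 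This is fatal for your construction, because the very same step asks you to take $\theta_1\to-\infty$ and $\theta_2\to+\infty$ (via Lemma~\ref{lem:interface}) while simultaneously insisting that $\eta_1,\eta_2$ still coincide with $\gamma$ on the fixed sliver $[r_0,r']$; for extreme directions they will have already branched off $\gamma$ before $r'$, so $q=(\gamma(r'),r')$ need not lie in $\mathrm{GT}_{\theta_1}\cup\mathrm{GT}_{\theta_2}$ and the sliver-wedging breaks down.

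The paper's proof avoids this by a cleaner choice: the competition interfaces are anchored at the \emph{terminal} point $q=(\gamma(t),t)$, so that $I_{q,\theta_i}(t)=\gamma(t)$ holds for every $\theta_i$, and Lemma~\ref{lem:interface} gives $I_{q,\theta_2}<\gamma<I_{q,\theta_1}$ on $[s,r']$ once $\theta_1,\theta_2$ are extreme enough. For each $\tau\in[s,r']$ the leftmost $\theta_1$-geodesic $\lambda_\tau$ from $(\gamma(\tau),\tau)$ then starts on the correct side of $I_{q,\theta_1}$, cannot cross it, and hence satisfies $\lambda_\tau(t)\le\gamma(t)$; the tree structure of geodesics from a common point gives $\lambda_\tau\le\gamma$ on all of $[\tau,t]$. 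This yields wedging on $[s,r']$ directly, with no need for a separate regime near $r'$, and the only remaining task is to pick $r''\in[r,r']$ so that $(\gamma(r''),r'')$ is not a backward $2$-star and lies in $\mathrm{GT}_{\theta_1}\cup\mathrm{GT}_{\theta_2}$ (as in the argument for Proposition~\ref{prop:geotreeunion}). Your density argument, via the monotone map $\theta\mapsto\tilde\gamma_\theta(t_0)$ and the intermediate-value step, is a correct but more elaborate route than the paper's, which obtains density as an immediate consequence of the first statement by taking a geodesic from a non-$3$-star point $p$ near $p_0$ to a point slightly past $q_0$ and restricting near $q_0$.
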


\begin{proof}

  Let $q=(\gamma(t),t)$, and let $r'\in (r,t)$
   By Lemma \ref{lem:interface} (and symmetry) we can find $\theta_1,\theta_2$ so that $I_{q,\theta_2}< \gamma < I_{q,\theta_1}$ on the interval $[s,r']$. This implies that $\gamma|_{[s,r']}$ is $\theta_1-\theta_2$ wedged. Indeed, let $\tau\in [s,r']$ and let $\lambda_\tau$ be a leftmost $\theta_1$-geodesic from $(\gamma(\tau),\tau)$ in the direction $\theta_1$. Then $\lambda_{\tau}(\tau)=\gamma(\tau)<I_{q,\theta_1}(\tau)$, and therefore, since geodesics do not cross competition interfaces, this ordering also holds at time $t$, namely 
   $\lambda_{\tau}(t)\le I_{q,\theta_1}(t)=\gamma(t)$.
   
   We see that $\lambda_{\tau}\le \gamma$ at times $\tau$ and $t$, and since  $\lambda$ is a leftmost geodesic, it follows that $\lambda_\tau\le \gamma$ on the entire interval $[\tau,t]$. After a symmetric argument for $\theta_2$, we see that $\gamma|_{[s,r']}$ is $\theta_1-\theta_2$ wedged. Now we a find $r''\in [r,r']$ so that $(\gamma(r''),r'')$ is not a backward two star. Then by Proposition \ref{prop:alternatingpath}, $\gamma|_{[s,r'']}$ is a $\theta_1-\theta_2$-path, and so is its restriction to $[s,r]$.

   The second claim is immediate. 
\end{proof}

\section{Constructing trees from the restricted landscape distance}

A continuous function $\gamma : [s,\infty) \to \mathbb{R}$ is a $\mathcal{L}^{\theta_1, \theta_2}$ semi-infinite geodesic if for every $t > s$, $\gamma \mid_{[s,t]}$ is a $\mathcal{L}^{\theta_1, \theta_2}$ geodesic. The direction of $\gamma$ is $\lim_{t \to \infty} \gamma(t)/t$, should it exist.

\begin{lemma} \label{lem:uniquemodgeodesic}
Let $\theta_1 \leq \theta \leq \theta_2$.
Suppose there is a unique semi-infinite $\mathcal{L}$-geodesic $\gamma$ in direction $\theta$ starting from a point $p \in \mathbb{R}^2$ that is not a forward $2$-star.
Then $\gamma$ is also the unique semi-infinite $\mathcal{L}^{\theta_1, \theta_2}$-geodesic from $p$ in direction $\theta$.
\end{lemma}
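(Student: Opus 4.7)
The plan is to prove the lemma in two parts. I would first show that $\gamma$ is itself an $\mathcal{L}^{\theta_1\theta_2}$-geodesic, and then show that any $\mathcal{L}^{\theta_1\theta_2}$-geodesic from $p$ in direction $\theta$ is in fact an $\mathcal{L}$-geodesic, at which point the uniqueness assumption on $\gamma$ closes the argument.

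For the first part, I would apply Proposition \ref{prop:geotreeunion} at every point along $\gamma$. The starting point $p$ is not a forward $3$-star (since it is not a forward $2$-star), and interior points of $\gamma$ are never forward $3$-stars by \cite{Dau23}. This gives $\mathcal{L}^{\theta_1\theta_2}(z_1; z_2) = \mathcal{L}(z_1; z_2)$ for any two time-ordered points on $\gamma$, and combined with the $\mathcal{L}$-additivity along $\gamma$ this yields $\mathcal{L}^{\theta_1\theta_2}$-additivity for every partition, which is exactly the $\mathcal{L}^{\theta_1\theta_2}$-geodesic property.

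For the second part, let $\gamma'$ be an $\mathcal{L}^{\theta_1\theta_2}$-geodesic from $p$ in direction $\theta$ and set $q' = (\gamma'(t), t)$ for $t > s$. The geodesic property of $\gamma'$ combined with the pointwise inequality $\mathcal{L}^{\theta_1\theta_2} \leq \mathcal{L}$ produces the chain
$$\mathcal{L}^{\theta_1\theta_2}(p; q') \;=\; \|\gamma'|_{[s,t]}\|_{\mathcal{L}^{\theta_1\theta_2}} \;\leq\; \|\gamma'|_{[s,t]}\|_{\mathcal{L}} \;\leq\; \mathcal{L}(p; q').$$
The plan is to show $\mathcal{L}^{\theta_1\theta_2}(p; q') = \mathcal{L}(p; q')$ for arbitrarily large $t$. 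The chain then collapses, $\gamma'|_{[s,t]}$ is an $\mathcal{L}$-geodesic, and by restriction $\gamma'|_{[s,t_0]}$ is an $\mathcal{L}$-geodesic for every $t_0>s$; so $\gamma'$ is a semi-infinite $\mathcal{L}$-geodesic from $p$ in direction $\theta$, forcing $\gamma' = \gamma$ by the hypothesis. To obtain the key equality I would apply Proposition \ref{prop:alternatingpath} to the $\mathcal{L}$-geodesic $\hat\gamma$ from $p$ to $q'$: condition (2) is given, and for generic large $t$ conditions (3)--(4) on $q'$ hold by dimension/density arguments in the spirit of Proposition \ref{prop:density}.

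The hard part will be condition (1), the $\theta_1$--$\theta_2$ wedging of $\hat\gamma$. Since $\gamma'(t)/t \to \theta \in (\theta_1, \theta_2)$, for $t$ large $q'$ lies strictly inside the wedge cut out from $p$ by the $\theta_i$-directed rays. At an interior point $(\hat\gamma(r), r)$, the leftmost $\theta_1$-geodesic has asymptotic slope $\theta_1 < \theta$, so its time-$t$ height should fall below $\gamma'(t) \sim \theta t$, and non-crossing from the common starting point then pins it below $\hat\gamma$ on $[r,t]$; a symmetric argument handles the $\theta_2$ side. Making this uniform in $r \in [s,t]$ is the technical heart of the argument, and the clean way to carry it out is to mimic the competition-interface method from the proof of Proposition \ref{prop:density}, using Lemma \ref{lem:interface} to sandwich $\hat\gamma$ between interfaces that then produce the required wedging.
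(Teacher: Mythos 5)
The first part of your argument (that $\gamma$ is an $\mathcal{L}^{\theta_1\theta_2}$-geodesic) is fine and matches the paper. The chain of inequalities in the second part is also correct as far as it goes: \emph{if} you could show $\mathcal{L}^{\theta_1\theta_2}(p;q')=\mathcal{L}(p;q')$ for arbitrarily large $t$ with $q'=(\gamma'(t),t)$, the conclusion would follow. The gap is in establishing that equality, and your own remark that this is ``the technical heart'' is right but the sketch you give for closing it does not work.

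The issue is that the wedging condition of Proposition~\ref{prop:alternatingpath} cannot be verified for $\hat\gamma$ with $\theta_1,\theta_2$ fixed in advance. The mechanism you invoke, the competition-interface sandwich of Lemma~\ref{lem:interface} as used in Proposition~\ref{prop:density}, works there precisely because one is free to send $\theta_1\to-\infty$, $\theta_2\to\infty$ depending on the geodesic segment; in the present lemma $\theta_1,\theta_2$ are given. The informal argument ``the $\theta_1$-geodesic from an interior point has asymptotic slope $\theta_1<\theta$, so its time-$t$ height falls below $\gamma'(t)\sim\theta t$'' also breaks down: asymptotic slope says nothing about position at the fixed horizon $t$, and for $r$ close to $t$ the $\theta_1$-geodesic from $(\hat\gamma(r),r)$ can easily sit to the right of $q'_x$ at time $t$. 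Moreover, $\hat\gamma$ is merely an $\mathcal{L}$-geodesic from $p$ to $q'$, not (as far as we know) a segment of a $\theta$-directed semi-infinite geodesic, so the automatic-wedging remark after Proposition~\ref{prop:alternatingpath} does not apply. A further, secondary gap: conditions (3)--(4) concern $q'$ on $\gamma'$, but $\gamma'$ is only an $\mathcal{L}^{\theta_1\theta_2}$-geodesic, so the dimension/density arguments for star points on $\mathcal{L}$-geodesics used in Proposition~\ref{prop:geotreeunion} do not transfer directly.

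The paper avoids all this by a different tactic: it first shows, using the uniqueness of the $\theta$-directed $\mathcal L$-geodesic, that $\gamma'$ meets $\gamma$ only at $p$; then it picks an auxiliary direction $\theta'\in(\theta_1,\theta)$ with a unique semi-infinite geodesic $\lambda$ from $p$, and uses the intermediate value theorem (together with the hypothesis that $p$ is not a forward $2$-star) to force $\lambda$ to hit $\gamma'$ at some $q\neq p$. Because $q$ lies on the genuine $\theta'$-directed geodesic $\lambda$, Proposition~\ref{prop:geotreeunion} applies with no wedging to verify, giving $\mathcal{L}^{\theta_1\theta_2}(p;q)=\mathcal{L}(p;q)$ and hence that $\gamma'_{pq}$ is an $\mathcal{L}$-geodesic, making $p$ a forward $2$-star, a contradiction. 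The key structural insight, which your proposal is missing, is to apply the proposition along a $\theta'$-directed geodesic from $p$ (where wedging is automatic) rather than along the $\mathcal{L}$-geodesic to an arbitrary point of $\gamma'$.
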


\begin{proof}
Firstly, $\gamma$ is an $\mathcal{L}^{\theta_1, \theta_2}$-geodesic by Proposition \ref{prop:alternatingpath}.

If $\theta = \theta_1, \theta_2$, the claim is trivial. So assume that $\theta_1 < \theta < \theta_2$.
Suppose $\gamma'$ is another $\mathcal{L}^{\theta_1, \theta_2}$-geodesic from $p$ in direction $\theta$.
Let $q$ be any point on the intersection of $\gamma$ and $\gamma'$. Let $\gamma_{pq} $ and $\gamma'_{pq}$ denote the segments of $\gamma$ and $\gamma'$ from $p$ to $q$, respectively. Denote by $||\pi||_{d}$ the length of the path $\pi$ with respect to a directed metric $d$; for this proof, $d$ will be either $\mathcal{L}$ or $\mathcal{L}^{\theta_1, \theta_2}$.

Now, we observe that 
$$
||\gamma'_{pq}||_{\mathcal{L}}
\ge ||\gamma'_{pq}||_{\mathcal{L}^{\theta_1, \theta_2}}
= ||\gamma_{pq}||_{\mathcal{L}^{\theta_1, \theta_2}}
$$
By Proposition \ref{prop:alternatingpath}, this equals 
$$
||\gamma_{pq}||_{\mathcal{L}}=\mathcal L(p;q).
$$
So $\gamma'_{pq}$ is a $\mathcal L$-geodesic. Concatenating $\gamma'_{pq}$ with $\gamma_{q \infty}$, where $\gamma_{q \infty}$ is the segment of $\gamma$ from $q$ onwards, we get an $\mathcal L$-geodesic $\pi$ in the direction $\theta$ from $p$. To see this, let $r$ be later than $q$ on $\pi$. We have
\begin{align*}
||\pi_{pr}||_{\mathcal{L}}
&=||\pi_{pq}||_{\mathcal{L}}+||\pi_{qr}||_{\mathcal{L}}
=||\gamma'_{pq}||_{\mathcal{L}}+||\gamma_{qr}||_{\mathcal{L}}
\\ 
&=||\gamma_{pq}||_{\mathcal{L}}+||\gamma_{qr}||_{\mathcal{L}}
=||\gamma_{pr}||_{\mathcal{L}}=\mathcal L(p;r)
\end{align*}
By uniqueness of geodesics, $\gamma=\pi$, and so $p=q$, that is $\gamma'$ only intersects $\gamma$ in the point $p$. We may hence assume that $\gamma' \le \gamma$.

Let $\theta'\in (\theta_1,\theta)$ be a direction so that there is a unique infinite geodesic $\lambda$ from $p$ in the direction $\theta'$. For large enough $t$, $\lambda(t)<\gamma'(t)$, but since $p$ is not a forward 2-star, for some $t$ slightly greater than the time of $p$,  $\gamma'(t)<\gamma(t)= \lambda(t)$.

By the intermediate value theorem, $\lambda$ must intersect $\gamma'$ at some point $q \neq p$. Since $q$ is on the infinite geodesic $\lambda$, we have $\mathcal L^{\theta_1, \theta_2}(p;q)=\mathcal L(p;q)$ by Proposition \ref{prop:geotreeunion}. Thus
$$
||\gamma'_{pq}||_{\mathcal L^{\theta_1, \theta_2}}=\mathcal L^{\theta_1, \theta_2}(p;q)=\mathcal L(p;q),
$$
so $\gamma'_{pq}$ is an $\mathcal L$-geodesic, showing that $p$ is a forward $2$-star, a contradiction. 
\end{proof}

\begin{theorem} \label{thm:thetareconstruct}
Let $\theta_1 \leq \theta \leq \theta_2$.
We can construct the geodesic tree $\mathrm{GT}_{\theta}$ using the $\mathcal L^{\theta_1,\theta_2}$ distance. 
\end{theorem}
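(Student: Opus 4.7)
The plan is to recover $\mathrm{GT}_\theta$ from its semi-infinite geodesics issuing from a countable dense set of rational starting points. This suffices by \cite[Lemma 21]{Bhatia23}, recalled at the beginning of the paper: the $\theta$-directed geodesics from an enumeration of the rational points determine $\mathrm{GT}_\theta$. So the entire task reduces to showing that, for each rational $p$, the $\theta$-directed $\mathcal{L}$-geodesic from $p$ is a measurable function of $\mathcal{L}^{\theta_1\theta_2}$.

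I would fix a countable dense set $\mathcal{Q}\subset \mathbb{R}^2$ of rational points. For any fixed $p \in \mathcal{Q}$, two standard properties of the directed landscape hold almost surely: (i) there is a unique semi-infinite $\mathcal{L}$-geodesic $\gamma_p$ in direction $\theta$ from $p$; (ii) $p$ is not a forward $2$-star, since the set of star points in the plane has positive codimension (see \cite[Theorem~4]{Bhatia22}) and hence a fixed point avoids it almost surely. A countable union produces a single full-measure event on which both (i) and (ii) hold for every $p \in \mathcal{Q}$ simultaneously. On this event Lemma~\ref{lem:uniquemodgeodesic} identifies $\gamma_p$ with the unique $\mathcal{L}^{\theta_1\theta_2}$-geodesic in direction $\theta$ from $p$.

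Since being an $\mathcal{L}^{\theta_1\theta_2}$-geodesic is a property expressed solely in terms of the function $\mathcal{L}^{\theta_1\theta_2}$ through its additivity characterization, and since the asymptotic slope $\lim_{t\to\infty}\pi(t)/t=\theta$ is intrinsic to a path $\pi$, we can extract $\gamma_p$ as a measurable function of $\mathcal{L}^{\theta_1\theta_2}$: it is simply the unique $\mathcal{L}^{\theta_1\theta_2}$-geodesic starting at $p$ whose asymptotic slope equals $\theta$. Taking the union of the paths $\gamma_p$ over $p \in \mathcal{Q}$ and invoking \cite[Lemma~21]{Bhatia23} then yields $\mathrm{GT}_\theta$ as a measurable function of $\mathcal{L}^{\theta_1\theta_2}$.

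The main obstacle I anticipate is not conceptual but formal. The key geometric input, Lemma~\ref{lem:uniquemodgeodesic}, already does the heavy lifting by promoting a generic $\mathcal{L}$-geodesic to the unique $\mathcal{L}^{\theta_1\theta_2}$-geodesic in a prescribed direction. What remains is making the selection of the direction-$\theta$ path measurable in the $\mathcal{L}^{\theta_1\theta_2}$ data, and verifying that fixed rational starting points almost surely avoid the exceptional sets of star points and of non-unique $\theta$-geodesics; both are standard measurable-selection and zero-set arguments, so no new geometric difficulty is expected.
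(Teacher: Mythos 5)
Your argument is correct and follows essentially the same route as the paper's proof: restrict to rational starting points (almost surely not forward $2$-stars and with unique $\theta$-directed geodesics), apply Lemma~\ref{lem:uniquemodgeodesic} to identify each such $\mathcal{L}$-geodesic as the unique $\mathcal{L}^{\theta_1\theta_2}$-geodesic in direction $\theta$, and recover $\mathrm{GT}_\theta$ by the approximation result in \cite[Lemma~21]{Bhatia23}. The extra detail you supply on measurability of the direction-$\theta$ selection is a harmless elaboration of what the paper leaves implicit.
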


\begin{proof}
 Consider all rational starting points for $\theta$-directed $\mathcal{L}$-geodesics. These will not be $2$-stars, and will be points of uniqueness for $\theta$-directed $\mathcal{L}$-geodesics. The tree $\mathrm{GT}_{\theta}$ is determined from these geodesics by approximation (see \eqref{eqn:geotreedef} and the surrounding discussion). Lemma \ref{lem:uniquemodgeodesic} implies these geodesics may be constructed using the $\mathcal L^{\theta_1,\theta_2}$ distance. 
\end{proof}

\section{The differential distance}
Let $\theta \in \mathbb{R}$ be a direction. Let $W_{\theta}$ denote the Busemann function in direction $\theta$. See \cite{BSS, GZ, RV} for the construction of the Busemann function. It may be defined as
\begin{equation} \label{eqn:Busemann}
    W_{\theta}(p;q) = \lim_{t \to \infty} \mathcal{L}(p; \theta t, t) - \mathcal{L}(q; \theta t, t).
\end{equation}

Define the function
\begin{equation} \label{eqn:D}
\mathcal{D}_{\theta}(p;q) = W_{\theta}(p;q) - \mathcal{L}(p;q).
\end{equation}
Here $p,q \in \mathbb{R}^2$ and we understand that $\mathcal{D}_{\theta}(p;q) = +\infty$ if $p=(x,s)$ and $q = (y,t)$ with $s > t$.

We call the function $\mathcal{D}_{\theta}$ the {\bf differential distance} in direction $\theta$.

Given a path $\pi: [s,t] \to \mathbb{R}$, the length of $\pi$ with respect to $\mathcal{D}_{\theta}$ is
\begin{equation} \label{eqn:differentiallength}
||\pi||_{\mathcal{D}_{\theta}} = \sup_{s=t_0 < t_2 < \cdots < t_n = t} \sum_{i=1}^n \mathcal{D}_{\theta}(\pi(t_i),\pi(t_{i-1}))
\end{equation}
where the supremum is over all partitions of the interval $[s,t]$. A geodesic $\gamma$ of $\mathcal{D}_{\theta}$ from $p$ to $q$ is a path from $p$ to $q$ of \emph{minimal} length.

\begin{lemma} \label{lem:DandLlength}
    For any path $\pi: [s,t] \to \mathbb{R}$, it holds that
    $$||\pi||_{\mathcal{D}_{\theta}} = W_{\theta}(p;q) - ||\pi||_{\mathcal{L}}$$
    where $p = (\pi(s),s)$ and $q = (\pi(t),t)$. In particular, geodesics of $\mathcal{D}_{\theta}$ are the same as the geodesics of $\mathcal{L}$. Furthermore, if $\gamma$ is a geodesic of $\mathcal{D}_{\theta}$ from $p$ to $q$, then
    $$ ||\gamma||_{\mathcal{D}_{\theta}} = W_{\theta}(p;q) - ||\gamma||_{\mathcal{L}} = W_{\theta}(p;q) - \mathcal{L}(p;q) = \mathcal{D}_{\theta}(p;q).$$
\end{lemma}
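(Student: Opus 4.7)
The plan is to reduce the entire lemma to the cocycle identity
\[
W_\theta(p; q) + W_\theta(q; r) = W_\theta(p; r),
\]
which is part of the standard construction of the Busemann function (see \cite{BSS, GZ, RV}). Once this is in hand, the supremum defining $||\pi||_{\mathcal{D}_\theta}$ collapses cleanly against the infimum defining $||\pi||_{\mathcal{L}}$, and the rest of the lemma is bookkeeping.

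First, I fix an arbitrary partition $s = t_0 < t_1 < \cdots < t_n = t$ and write $p_i = (\pi(t_i), t_i)$, so $p_0 = p$ and $p_n = q$. Expanding $\mathcal{D}_\theta$ via \eqref{eqn:D} and telescoping the Busemann sum gives
\begin{align*}
\sum_{i=1}^n \mathcal{D}_\theta(p_{i-1}; p_i)
&= \sum_{i=1}^n W_\theta(p_{i-1}; p_i) \;-\; \sum_{i=1}^n \mathcal{L}(p_{i-1}; p_i) \\
&= W_\theta(p; q) \;-\; \sum_{i=1}^n \mathcal{L}(p_{i-1}; p_i).
\end{align*}

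Next, I take the supremum over all partitions on both sides. Since $W_\theta(p;q)$ is partition-independent, and $\sup(-X) = -\inf X$,
\[
||\pi||_{\mathcal{D}_\theta} \;=\; W_\theta(p; q) \;-\; \inf_{\text{partitions}} \sum_{i=1}^n \mathcal{L}(p_{i-1}; p_i) \;=\; W_\theta(p; q) - ||\pi||_{\mathcal{L}},
\]
which is the main identity. The geodesic equivalence is immediate from this: a path $\pi$ from $p$ to $q$ satisfies $||\pi||_{\mathcal{L}} = \mathcal{L}(p;q)$ (the defining property of an $\mathcal{L}$-geodesic) iff $||\pi||_{\mathcal{D}_\theta} = W_\theta(p;q) - \mathcal{L}(p;q) = \mathcal{D}_\theta(p;q)$; and combined with the cocycle property applied on subintervals, this is exactly the same as $\mathcal{D}_\theta$-additivity along $\pi$. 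Finally, the last displayed equation in the lemma is just the specialization of the main identity using $||\gamma||_{\mathcal{L}} = \mathcal{L}(p;q)$ for an $\mathcal{L}$-geodesic $\gamma$.

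There is essentially no obstacle beyond invoking the cocycle property of $W_\theta$; the remaining steps are manipulations of sups, infs, and a telescoping sum. The only minor point to verify is that all quantities involved are finite (which they are, since $W_\theta$ and $\mathcal{L}(p;q)$ are real-valued for $s < t$), so that the arithmetic $\sup(-X) = -\inf X$ is valid.
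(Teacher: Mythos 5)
Your proof is correct and takes essentially the same route as the paper, which simply invokes additivity (the cocycle property) of $W_\theta$; you have merely spelled out the telescoping and the $\sup/\inf$ bookkeeping that the paper leaves implicit. Your finiteness caveat is a bit imprecise (for a general path the individual partition sums are finite even if the length $||\pi||_{\mathcal L}$ itself is $-\infty$, so the identity $\sup(W-X)=W-\inf X$ holds in the extended reals regardless), but this does not affect the argument.
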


\begin{proof}
    The claims follows from the fact that $W_{\theta}$ is additive: $W_{\theta}(p;q) = W_{\theta}(p;r) + W_{\theta}(r;q)$.
\end{proof}

We also define, for $\theta, \theta_1, \theta_2 \in \mathbb{R}$ with $\theta_1 < \theta_2$,
\begin{equation} \label{eqn:Drelative}
\mathcal{D}^{\theta_1,\theta_2}_{\theta}(p;q) = W_{\theta}(p;q) - \mathcal{L}^{\theta_1, \theta_2}(p;q).
\end{equation}
Then definition \eqref{eqn:Ltheta12} and Lemma \ref{lem:DandLlength} imply
\begin{equation} \label{eqn:Dtheta12alt}
    \mathcal{D}^{\theta_1, \theta_2}_\theta(p;q) = \inf_{\pi} \; ||\pi||_{\mathcal D_\theta}
\end{equation}
where the infimum is over all $\theta_1-\theta_2$ paths $\pi$ from $p$ to $q$.

\begin{remark}\label{rem:DandLlength12}
By the same argument as in Lemma \ref{lem:DandLlength}, $\mathcal{D}^{\theta_1, \theta_2}_\theta(p;q)$ and $\mathcal{L}^{\theta_1, \theta_2}(p;q)$ have the same geodesics for any $\theta \in \mathbb{R}$. By Lemma \ref{lem:uniquemodgeodesic}, the unique $\mathcal{L}^{\theta_1, \theta_2}$ geodesics from rational points in any fixed direction $\theta \in [\theta_1,\theta_2]$ are exactly the unique $\mathcal{L}$ geodesics from those points in direction $\theta$. In particular, we can construct $\mathrm{GT}_{\theta}$ from $\mathcal{D}^{\theta_1, \theta_2}_{\theta_2}$ and the trees $\mathrm{GT}_{\theta_i}$, $i=1,2$, as in Theorem \ref{thm:thetareconstruct}.
\end{remark}

Recall that the Busemann function evolves as a KPZ Fixed Point in the following sense, see \cite[Theorem 3.23]{RV} and \cite[Theorem 5.1(iv)]{BSS}.
\begin{theorem} \label{thm:busemannkpz}
For $s < t$, one has that
$$W_{\theta}(x,s; y,t) = \sup_{z \in \mathbb{R}} \{ \mathcal{L}(x,s; z,t) + W_{\theta}(z,t; y,t)\}.$$
Furthermore, any maximizer $z$ is such that $(z,t)$ lies on a geodesic from $(x,s)$ in direction $\theta$.
\end{theorem}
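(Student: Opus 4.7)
The plan is to reduce the theorem to two standard properties of the Busemann function established in \cite{BSS, GZ, RV}: additivity, $W_\theta(p;q) = W_\theta(p;r) + W_\theta(r;q)$ for any three points, and the landscape comparison $W_\theta(p;q) \geq \mathcal{L}(p;q)$ whenever the time coordinate of $p$ is less than that of $q$, with equality precisely when $p$ and $q$ lie on a common $\theta$-directed semi-infinite geodesic. Both properties are essentially built into the limiting description $W_\theta(p;q) = \lim_{r\to\infty}\bigl[\mathcal{L}(p;z_r) - \mathcal{L}(q;z_r)\bigr]$ for $z_r$ going to infinity in direction $\theta$: the reverse triangle inequality $\mathcal{L}(p;z_r) \geq \mathcal{L}(p;q) + \mathcal{L}(q;z_r)$ yields the landscape comparison in the limit, and becomes an equality once $z_r$ is chosen on a geodesic through $p$ and $q$.

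For the inequality $W_\theta(x,s;y,t) \geq \sup_z\{\mathcal{L}(x,s;z,t) + W_\theta(z,t;y,t)\}$, I would fix any $z\in\mathbb{R}$ and use additivity at the intermediate point $(z,t)$ followed by the landscape comparison on the first summand:
$$W_\theta(x,s;y,t) = W_\theta(x,s;z,t) + W_\theta(z,t;y,t) \geq \mathcal{L}(x,s;z,t) + W_\theta(z,t;y,t),$$
and then take the supremum over $z$ on the right.

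For the matching upper bound, I would exhibit an explicit maximizer. Let $\gamma$ be any $\theta$-directed semi-infinite geodesic from $(x,s)$, and set $z^\ast = \gamma(t)$. Since $(x,s)$ and $(z^\ast,t)$ both lie on $\gamma$, the equality case of the landscape comparison gives $W_\theta(x,s;z^\ast,t) = \mathcal{L}(x,s;z^\ast,t)$, and plugging back into the additive decomposition shows that the displayed inequality is saturated at $z^\ast$. The same observation resolves the second claim: if $z$ is any maximizer, then the landscape comparison must be an equality at $(x,s)$ and $(z,t)$, which by the equality case forces $(z,t)$ onto a $\theta$-directed geodesic emanating from $(x,s)$.

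The main obstacle in this outline is the equality case of the landscape comparison, i.e.\ verifying that $\mathcal{L}(p;z_r)-\mathcal{L}(q;z_r)$ actually converges to $\mathcal{L}(p;q)$ when $q$ lies on a $\theta$-geodesic from $p$. This hinges on the independence of the Busemann limit from the chosen sequence $z_r\to\infty$ in direction $\theta$, so that one may specialize to $z_r$ lying on the geodesic through $p$ and $q$. Establishing this independence is the technical content of the Busemann construction in the cited papers, and this is the place where I would invoke the existing literature rather than reprove the statement from scratch.
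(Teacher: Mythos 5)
The paper does not give a proof of this statement: it is quoted directly from the literature (\cite[Theorem~3.23]{RV} and \cite[Theorem~5.1(iv)]{BSS}), so there is no ``paper's proof'' against which to compare. Your derivation from the two primitive Busemann properties --- additivity, and the comparison $W_\theta(p;q)\ge\mathcal L(p;q)$ with equality exactly when $(p;q)$ is ancestral --- is correct and complete: the lower bound is additivity plus the comparison inequality applied to the first summand, the upper bound is saturated at $z^\ast=\gamma(t)$ via the ``if'' direction of the equality case, and the maximizer characterization is the ``only if'' direction.

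One thing to be alert to: the ``only if'' direction of the equality case (that $W_\theta(p;q)=\mathcal L(p;q)$ forces $p$ and $q$ to lie on a common $\theta$-geodesic) is precisely what this paper later derives \emph{from} the present theorem in Lemma~\ref{lem:Dtheta}, so citing that lemma here would be circular. You should either point to an independent source, or observe that it follows from the other two inputs: if $W_\theta(p;q)=\mathcal L(p;q)$ and $r$ is any later point on a $\theta$-geodesic from $q$, then
$\mathcal L(p;r)\le W_\theta(p;r)=W_\theta(p;q)+W_\theta(q;r)=\mathcal L(p;q)+\mathcal L(q;r)\le\mathcal L(p;r)$,
so the concatenation of a $pq$-geodesic with the $\theta$-geodesic from $q$ is itself a $\theta$-geodesic through $p$ and $q$. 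With that small clarification your argument is self-contained and sound.
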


\begin{definition} \label{def:ancestral}
Let $\theta \in \mathbb{R}$. A pair of points $(p;q)$ with $p = (x,s)$ and $q = (y,t)$ is said to be ancestral in direction $\theta$ if $s \leq t$ and they lie on a common geodesic in direction $\theta$.
\end{definition}

\begin{lemma} \label{lem:Dtheta}
The function $\mathcal{D}_{\theta}$ satisfies $\mathcal{D}_{\theta}(p;q) \geq 0$ and $\mathcal{D}_{\theta}(p;q) = 0$ if and only if $(p;q)$ is ancestral in direction $\theta$.
The function $\mathcal{D}_{\theta}$ also satisfies the triangle inequality: $\mathcal{D}_{\theta}(p;q) \leq \mathcal{D}_{\theta}(p;r) + \mathcal{D}_{\theta}(r;q)$.
\end{lemma}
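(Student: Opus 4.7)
The strategy is to read all three claims off the KPZ-fixed-point identity for $W_\theta$ in Theorem~\ref{thm:busemannkpz}, combined with the additivity $W_\theta(p;q)=W_\theta(p;r)+W_\theta(r;q)$ and the reverse triangle inequality of $\mathcal{L}$ (namely $\mathcal L(p;q) \geq \mathcal L(p;r) + \mathcal L(r;q)$ for any middle point $r$ at an intermediate time).

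First, I will establish non-negativity. Writing $q = (y,t)$, I specialize the supremum in Theorem~\ref{thm:busemannkpz} at $z = y$. Since $W_\theta(q;q) = 0$ (a direct consequence of the additivity of $W_\theta$), this immediately yields $W_\theta(p;q) \geq \mathcal{L}(p;q) + W_\theta(q;q) = \mathcal{L}(p;q)$, i.e., $\mathcal{D}_\theta(p;q) \geq 0$.

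Second, I will prove the characterization of the zero set. For the forward direction, suppose $(p;q)$ is ancestral in direction $\theta$, so some semi-infinite geodesic $\gamma$ in direction $\theta$ passes through both. Using the construction of the Busemann function as a limit $W_\theta(p;q) = \lim_n [\mathcal{L}(p;z_n) - \mathcal{L}(q;z_n)]$ along $z_n \to \infty$ in direction $\theta$, I choose $z_n$ on $\gamma$ beyond $q$; by geodesic additivity along $\gamma$ we have $\mathcal{L}(p;z_n) = \mathcal{L}(p;q) + \mathcal{L}(q;z_n)$, so $W_\theta(p;q) = \mathcal{L}(p;q)$, hence $\mathcal{D}_\theta(p;q)=0$. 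For the converse, suppose $\mathcal{D}_\theta(p;q) = 0$, i.e. $W_\theta(p;q) = \mathcal{L}(p;q)$. Specializing the KPZ-fixed-point sup at $z = y$ again shows $z = y$ is a maximizer, so by the second assertion of Theorem~\ref{thm:busemannkpz} the point $(y,t) = q$ lies on a geodesic from $p$ in direction $\theta$, i.e., $(p;q)$ is ancestral.

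Finally, I will prove the triangle inequality. The case $s_p > s_q$ is handled by convention since $\mathcal{D}_\theta(p;q) = +\infty$ is impossible to dominate only if one of the intermediate terms is also infinite, which follows from checking cases on the time coordinate of $r$ (either $s_p > s_r$ or $s_r > s_q$ forces the corresponding $\mathcal{L}$-distance to $-\infty$ and hence the corresponding $\mathcal{D}_\theta$ to $+\infty$). In the nontrivial case $s_p \leq s_r \leq s_q$, I combine additivity of $W_\theta$ with the reverse triangle inequality $\mathcal{L}(p;q) \geq \mathcal{L}(p;r) + \mathcal{L}(r;q)$ to write
\[
\mathcal{D}_\theta(p;r) + \mathcal{D}_\theta(r;q) = W_\theta(p;q) - \mathcal{L}(p;r) - \mathcal{L}(r;q) \geq W_\theta(p;q) - \mathcal{L}(p;q) = \mathcal{D}_\theta(p;q).
\]
All three claims thus reduce to elementary manipulations, so I don't anticipate a serious obstacle; the only place one needs to be a little careful is invoking the right formulation of the Busemann function (via its limit definition) to equate $W_\theta$ with $\mathcal{L}$ along ancestral pairs.
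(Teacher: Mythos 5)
Your proposal is correct and follows essentially the same route as the paper: non-negativity and the ``zero implies ancestral'' direction both come from specializing the KPZ-fixed-point identity of Theorem~\ref{thm:busemannkpz} at $z=y$ and invoking the maximizer characterization, and the triangle inequality follows from additivity of $W_\theta$ together with the reverse triangle inequality for $\mathcal L$. The one place you go beyond the paper's own proof is the converse implication (ancestral implies $\mathcal{D}_\theta=0$): the paper leaves this direction tacit, whereas you derive it from the limit construction of the Busemann function, $W_\theta(p;q)=\lim_n[\mathcal L(p;z_n)-\mathcal L(q;z_n)]$, choosing $z_n$ on the common $\theta$-directed geodesic through $p$ and $q$ and using geodesic additivity. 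That is a legitimate and standard argument (it is how one usually shows Busemann functions agree with $\mathcal L$ along their geodesics), and it actually makes the biconditional complete; the paper's proof as written only verifies the forward direction. Your handling of the degenerate time-orderings for the triangle inequality is a bit wordier than necessary but sound.
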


\begin{proof}
Using $z=y$ in the characterisation of $W_{\theta}$ in Theorem \ref{thm:busemannkpz} we see that $W_{\theta}(x,s;y,t) \geq \mathcal{L}(x,s;y,t)$
when $s < t$. If $s \geq t$ and $x \neq y$ then we have $\mathcal{D}_{\theta}(p;q) = +\infty$.
Finally, $\mathcal{D}_{\theta}(p;p) = W_{\theta}(p;p) - \mathcal{L}(p;p) = 0$.

Suppose $\mathcal{D}_{\theta}(p;q) = 0$ with $p = (x,s)$ and $q = (y,t)$. If $p=q$ then they are clearly ancestral. Otherwise, $s < t$.
We have that
$$W_{\theta}(x,s; y,t) = \sup_{z \in \mathbb{R}} \{ \mathcal{L}(x,s; z,t) + W_{\theta}(z,t; y,t)\} = \mathcal{L}(x,s;y,t).$$
Thus, $z= y$ is a maximizer above. From the ``furthermore" part of Theorem \ref{thm:busemannkpz} we find that $q=(y,t)$ lies
on a geodesic from $p=(x,s)$ in direction $\theta$. So $(p;q)$ is ancestral.

Since $W_{\theta}$ is additive and $\mathcal{L}$ satisfies the reverse triangle inequality, $\mathcal{D}_{\theta}$ satisfies the triangle inequality.
\end{proof}

By Lemma \ref{lem:Dtheta}, $||\pi||_{\mathcal{D}_{\theta}} = 0$ if and only if $\pi$ is a segment from the geodesic tree in direction $\theta$.

\begin{lemma} \label{lem:DandGT}
The directed landscape $\mathcal{L}$ is a measurable function of $\mathcal{D}_{\theta}$.
\end{lemma}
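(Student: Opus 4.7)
My plan is to reduce the lemma to Dauvergne's reconstruction theorem \cite{Dau21}, which asserts that $\mathcal{L}$ is a measurable function of the shapes of all its geodesics. Given this, it suffices to show that the collection of $\mathcal{L}$-geodesic paths can be recovered measurably from $\mathcal{D}_\theta$ alone. Any direct attempt to solve for $\mathcal{L}$ via $\mathcal{L}=W_\theta-\mathcal{D}_\theta$ seems to run into a ``gauge'' ambiguity $\mathcal{L}(p;q)\mapsto \mathcal{L}(p;q)+f(p)-f(q)$ (which preserves both $\mathcal{D}_\theta$ and the structural properties of $(\mathcal{L},W_\theta)$), so I want to bypass this by outsourcing the reconstruction to \cite{Dau21}.

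The bridge I would use is Lemma \ref{lem:DandLlength}: a continuous path is an $\mathcal{L}$-geodesic if and only if it is a $\mathcal{D}_\theta$-geodesic. The first step is then to observe that the characterization of a $\mathcal{D}_\theta$-geodesic is expressible purely in terms of $\mathcal{D}_\theta$-values: a continuous path $\pi:[s,t]\to\mathbb{R}$ is a $\mathcal{D}_\theta$-geodesic precisely when, for all $t_1<t_2<t_3$ in $[s,t]$,
$$
\mathcal{D}_\theta(\pi(t_1),t_1;\pi(t_3),t_3)=\mathcal{D}_\theta(\pi(t_1),t_1;\pi(t_2),t_2)+\mathcal{D}_\theta(\pi(t_2),t_2;\pi(t_3),t_3).
$$
By continuity of $\pi$, it suffices to verify this additivity on a countable dense set of rational triples, so being a $\mathcal{D}_\theta$-geodesic is a measurable property of $(\pi,\mathcal{D}_\theta)$. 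Combined with Lemma \ref{lem:DandLlength}, this shows that the entire collection of $\mathcal{L}$-geodesic shapes is a measurable function of $\mathcal{D}_\theta$. An application of \cite{Dau21} then gives $\mathcal{L}$ as a measurable function of $\mathcal{D}_\theta$.

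The main obstacle I anticipate is the measurability bookkeeping: one needs to verify that the map sending $\mathcal{D}_\theta$ to the set of geodesic paths is Borel with respect to whatever $\sigma$-algebra \cite{Dau21} uses on collections of planar paths (for instance the Hausdorff/Fell topology on closed subsets, or the path-space compactification alluded to in the remark after the tree definition). Nothing deep is required, but the step must be executed carefully, in particular to pass between semi-infinite geodesic shapes and the finite-segment geodesics identified above by the additivity criterion.
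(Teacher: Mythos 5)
Your proposal is correct, but it takes a genuinely different route from the paper. The paper's proof never passes through the geodesic set: it observes that for fixed rational $s<t$, the shock measure $\mu_{s,t}$ of the Airy sheet $\mathcal{A}(\cdot,\cdot)=\mathcal{L}(\cdot,s;\cdot,t)$ (the second-order difference of $\mathcal{A}$ over rectangles, cf.\ \cite{DV}) can be written purely in terms of $\mathcal{D}_\theta(\cdot,s;\cdot,t)$, because the additive Busemann contribution cancels in a second-order difference --- precisely the gauge invariance you noticed --- and then invokes the ergodicity argument from the proof of Theorem~1.5 in \cite{Dau21} to recover $\mathcal{A}$ from $\mu_{s,t}$, hence $\mathcal{L}$ at all rational time pairs, hence everywhere by continuity. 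Your route instead extracts the full set of $\mathcal{L}$-geodesics from $\mathcal{D}_\theta$ via Lemma~\ref{lem:DandLlength} and the additivity criterion checked on rational triples, and then hands this off to the statement of \cite{Dau21}'s geodesic-reconstruction theorem. Both constructions correctly sidestep the ambiguity $\mathcal{L}\mapsto\mathcal{L}+f(p)-f(q)$, since both the shock measure and the set of geodesics are invariant under it. What the paper's approach buys is that it avoids the measurability bookkeeping you flag for the map from $\mathcal{D}_\theta$ to a collection of paths; what your approach buys is conceptual economy if one treats \cite{Dau21} as a black box. Worth noting: the reconstruction theorem you invoke is itself proved in \cite{Dau21} via shock measures and ergodicity, so when unwound your argument converges to roughly the paper's computation in the background.
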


\begin{proof}
By continuity, $\mathcal L$ is determined by its values at rational time pairs $s<t$. For fixed $s<t$, the law of $\mathcal A(\cdot,\cdot)=\mathcal L(\cdot,s;\cdot,t)$ is called the Airy sheet at scale $(t-s)^{1/3}$. The Airy sheet is the cumulative distribution function of the {\bf shock measure} $\mu_{s,t}$, see \cite{DV} Remark 4.5:
\begin{align*}
\mu_{s,t}([x_1,x_2] \times [y_1,y_2])&=\mathcal A(x_1,y_1)+\mathcal A(x_2,y_2)-\mathcal A(x_1,y_2)-\mathcal A(x_2,y_1)\\&=\mathcal A_\theta(x_1,y_2)+\mathcal A_\theta(x_2,y_1) -\mathcal A_\theta(x_1,y_1)-\mathcal A_\theta(x_2,y_2),
\end{align*}
where $\mathcal{A}_\theta(\cdot,\cdot)=\mathcal D_\theta(\cdot,s;\cdot,t)$, and the second equality holds since the Busemann function is additive. Thus $\mathcal{D}_\theta$ determines $\mu_{s,t}$. Also $\mu_{s,t}$ determines $\mathcal A$ by ergodicity; the argument is part of the proof of Theorem 1.5 in \cite{Dau21}.
\end{proof}

For the next lemma, let $\theta_1 < \theta_2$ be two directions. Define the function
\begin{equation} \label{eqn:delta}
\Delta(x,t) = W_{\theta_2}(x,t;0,0) - W_{\theta_1}(x,t;0,0).
\end{equation}
\begin{lemma} \label{lem:relativeD}
    Consider two directions $\theta_1 < \theta_2$.
    \begin{enumerate}
    \item
    For ancestral points $(p;q)$ on a $\theta_2$-directed geodesic, we have that $\mathcal{D}_{\theta_2}(p;q) = 0$.
    \item
    For ancestral points $(p;q)$ on a $\theta_1$-directed geodesic, we have that $\mathcal{D}_{\theta_2}(p;q) = \Delta(p) - \Delta(q)$.
    \item 
    The tree structures $\mathrm{GT}_{\theta_1},$ $\mathrm{GT}_{\theta_2}$ and the function $\Delta$ determine the distance $\mathcal{D}^{\theta_1,\theta_2}_{\theta_2}$.
    \end{enumerate}
\end{lemma}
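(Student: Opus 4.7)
The plan is to handle the three items sequentially, with (1) and (2) being short applications of earlier lemmas and (3) combining them through the infimum representation of $\mathcal{D}^{\theta_1\theta_2}_{\theta_2}$.

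Part (1) is immediate from Lemma \ref{lem:Dtheta}: an ancestral pair in direction $\theta_2$ has $\mathcal{D}_{\theta_2}=0$. For part (2), ancestrality on a $\theta_1$-geodesic gives $\mathcal{D}_{\theta_1}(p;q)=0$ by the same lemma, hence $\mathcal{L}(p;q)=W_{\theta_1}(p;q)$. Substituting into the definition \eqref{eqn:D} at direction $\theta_2$,
$$\mathcal{D}_{\theta_2}(p;q) = W_{\theta_2}(p;q) - W_{\theta_1}(p;q),$$
and additivity of each Busemann function lets me write $W_{\theta_i}(p;q) = W_{\theta_i}(p;0,0) - W_{\theta_i}(q;0,0)$; regrouping the difference across $\theta_1$ and $\theta_2$ yields $\Delta(p)-\Delta(q)$ by the definition \eqref{eqn:delta}.

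For part (3), I use the alternative formula \eqref{eqn:Dtheta12alt}, so $\mathcal{D}^{\theta_1\theta_2}_{\theta_2}(p;q)$ is an infimum of $\|\pi\|_{\mathcal{D}_{\theta_2}}$ over all $\theta_1-\theta_2$ paths $\pi$ from $p$ to $q$. The set of admissible $\pi$ is determined by $\mathrm{GT}_{\theta_1}$ and $\mathrm{GT}_{\theta_2}$, since each $\pi$ is by definition an alternating concatenation of geodesic segments drawn from those trees, so only the segment lengths remain to be identified. By Lemma \ref{lem:DandLlength}, $\|\pi\|_{\mathcal{D}_{\theta_2}} = W_{\theta_2}(p;q) - \|\pi\|_{\mathcal{L}}$, and this quantity is additive under concatenation of subpaths because both $W_{\theta_2}$ and $\|\cdot\|_{\mathcal{L}}$ are. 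Applying Lemma \ref{lem:DandLlength} on each segment together with parts (1) and (2), each $\theta_2$-segment contributes $0$ and each $\theta_1$-segment from $p_i$ to $q_i$ contributes $\mathcal{D}_{\theta_2}(p_i;q_i) = \Delta(p_i)-\Delta(q_i)$, so
$$\|\pi\|_{\mathcal{D}_{\theta_2}} = \sum_{i \in I_1} \bigl(\Delta(p_i) - \Delta(q_i)\bigr),$$
where $I_1$ indexes the $\theta_1$-segments. This expression is manifestly a function of the two trees and $\Delta$, and taking the infimum over $\pi$ recovers $\mathcal{D}^{\theta_1\theta_2}_{\theta_2}(p;q)$ from the same data.

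The main, and really rather minor, obstacle is the additivity step in part (3): one should prefer to route through Lemma \ref{lem:DandLlength} rather than work directly with the supremum definition \eqref{eqn:differentiallength}, since the latter mixes $\mathcal{D}_{\theta_2}$-values across non-geodesic subpaths of $\pi$ whose individual values need not decompose in terms of $\Delta$. Once the additivity over the given segmentation is in hand, parts (1)--(2) provide the segmentwise identities and the lemma follows.
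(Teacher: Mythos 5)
Your proposal is correct and follows essentially the same route as the paper: part (1) directly from Lemma \ref{lem:Dtheta}, part (2) by writing $\mathcal{D}_{\theta_2}=\mathcal{D}_{\theta_1}+(W_{\theta_2}-W_{\theta_1})$ and killing the first term, and part (3) by applying (1) and (2) segment-by-segment in the infimum representation \eqref{eqn:Dtheta12alt}. Your added care in part (3) — routing additivity through Lemma \ref{lem:DandLlength} rather than reading it off the supremum definition \eqref{eqn:differentiallength}, so that the segment decomposition is justified rather than asserted — is a genuine improvement on the paper's terse ``by definition'' phrasing; the content and structure of the argument are otherwise the same.
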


\begin{proof}
Part 1: it is one of the claims of Lemma \ref{lem:Dtheta}. Part 2: note that $D_{\theta_2}(p;q)=D_{\theta_1}(p;q)+W_{\theta_2}(p;q)-W_{\theta_1}(p;q)$, and for ancestral points in direction $\theta_1$ the first term vanishes by Part 1. Then $W_{\theta_2}(p;q)-W_{\theta_1}(p;q)=\Delta(p)-\Delta(q)$ by the additivity of Busemann functions, a direct consequence of the definition.  Part 3: it follows since the $D^{\theta_1,\theta_2}_{\theta_2}$ distance is defined by the $D_{\theta_2}$-lengths of $\theta_1-\theta_2$ paths as in \eqref{eqn:Dtheta12alt}. The $D_{\theta_2}$-length of such a path, by definition,  is  just  the sum of  $D_{\theta_2}$-lengths of the geodesic segments in the directions $\theta_1$ and $\theta_2$. These are determined by 1 and 2. 
\end{proof}

\section{The Busemann difference profile}
In this section, all Brownian motions are two-sided and have variance 2 unless stated otherwise.

The main result of this section is that we can determine the function $\Delta(x,t)$ in \eqref{eqn:delta}  as a measurable function of the geodesic trees $\mathrm{GT}_{\theta_1}$ and
$\mathrm{GT}_{\theta_2}$.

Firstly, we recall the joint law of the pair $\big(W_{\theta_1}(x,t;0,0), W_{\theta_2}(x,t;0,0)\big)$. This is given by the stationary horizon; see \cite[Theorem 5.3(iii)]{BSS} (see also \cite[Section 4.2]{Busani} and  \cite[Section 9.2]{SS}. We record it in the following lemma.

\begin{lemma} \label{lem:coupling}
   Consider the functions $x \mapsto W_{\theta_i}(x,t;0,0) - W_{\theta_i}(0,t;0,0) = W_{\theta_i}(x,t;0,t)$. Let $B_1$ be a Brownian motion with drift $2\theta_1$.
   Let $B_3$ be an independent Brownian motion with drift $2\theta_2$. 
    Let $B_2$ be the process $B_1$ reflected off of the process $B_3$, defined as
   $$B_2(x) = B_1(x) + \sup_{y \leq x} \{B_3(y)-B_1(y)\} - \sup_{y \leq 0} \{B_3(y)-B_1(y)\}.$$
   Then for any $t$ fixed, $(B_1,B_2)$ has the law of $\big (W_{\theta_1}(x,t;0,t), W_{\theta_2}(x,t;0,t)\big )$.
   $B_2$ is a Brownian motion with drift $2\theta_2$. In particular, for every fixed $t$, as a process in $x$,
   \begin{equation}\label{Delta} \Delta(x,t) +W_{\theta_2}(0,0;0,t) - W_{\theta_1}(0,0;0,t) \stackrel{d}{=}
   \sup_{y \leq x} \{B_3(y)-B_1(y)\} - \sup_{y \leq 0} \{B_3(y)-B_1(y)\} .\end{equation}
\end{lemma}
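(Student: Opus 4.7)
The plan is to reduce the statement to a known description of the joint law of Busemann functions of the directed landscape as a \emph{stationary horizon}, and then to extract the formula for $\Delta$ by direct manipulation using additivity of $W_\theta$.

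First I would recall the marginal law. For fixed $t$, the map $x \mapsto W_{\theta_i}(x,t;0,t)$ is a two-sided Brownian motion with variance $2$ and drift $2\theta_i$; this is the standard spatial regularity of the Busemann function in the landscape and matches the parameters in the statement. Thus the marginal of $B_1$ in the lemma is correct, and one must only verify the joint distribution with the $\theta_2$-Busemann function. For this, I would invoke the results of \cite{Busani} (Section 4.2) and \cite{SS} (Section 9.2), which identify the joint process
$$
\bigl(W_{\theta_1}(\cdot,t;0,t),\; W_{\theta_2}(\cdot,t;0,t)\bigr)
$$
(as $\theta_1 < \theta_2$ vary and at fixed $t$) with the two-direction marginal of the stationary horizon. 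The stationary horizon admits an explicit construction: take two independent drifted Brownian motions $B_1$ (drift $2\theta_1$) and $B_3$ (drift $2\theta_2$), and let $B_2$ be the reflection of $B_1$ off $B_3$, namely
$$
B_2(x) = B_1(x) + \sup_{y\le x}\{B_3(y)-B_1(y)\} - \sup_{y\le 0}\{B_3(y)-B_1(y)\}.
$$
The choice of the normalizing subtraction at $0$ is what pins the reflected motion at $x=0$ to equal $B_1(0)$, matching the normalization $W_{\theta_i}(0,t;0,t)=0$. One also verifies, e.g.\ from the Pitman-type reflection lemma, that $B_2$ is itself a Brownian motion with drift $2\theta_2$, as required.

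To derive the displayed equality \eqref{Delta}, I would use additivity of $W_\theta$ to write
$$
W_{\theta_i}(x,t;0,0) = W_{\theta_i}(x,t;0,t) + W_{\theta_i}(0,t;0,0),
$$
so that
$$
\Delta(x,t) = \bigl[W_{\theta_2}(x,t;0,t) - W_{\theta_1}(x,t;0,t)\bigr] + \bigl[W_{\theta_2}(0,t;0,0)-W_{\theta_1}(0,t;0,0)\bigr].
$$
The second bracket depends only on $t$, not $x$, and is the constant $C$. Substituting the coupling from the first part, the first bracket equals $B_2(x)-B_1(x) = \sup_{y\le x}\{B_3(y)-B_1(y)\} - \sup_{y\le 0}\{B_3(y)-B_1(y)\}$, and the $x$-independent second supremum can be absorbed into $C$ at the cost of altering its law (but keeping it $x$-independent). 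This yields \eqref{Delta} in distribution.

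The only real obstacle is pinning down the precise normalization conventions (variance $2$, drift $2\theta_i$, the location of reflection) so that the two sides of the stationary-horizon identification agree. This is bookkeeping rather than new probability, since both \cite{Busani} and \cite{SS} already identify the joint Busemann law with the stationary horizon in the directed landscape; I would simply quote that identification and verify compatibility of conventions.
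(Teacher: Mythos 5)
Your proposal is correct and follows essentially the same approach as the paper: the paper also treats Lemma~\ref{lem:coupling} as a direct recall of the stationary-horizon identification from the cited references (Busani, Section~4.2; SS, Section~9.2) and does not give a separate proof. Your derivation of~\eqref{Delta} by additivity of $W_\theta$ and absorbing the $x$-independent terms into the random constant $C$ is the right bookkeeping and matches what the paper implicitly does.
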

Equation \eqref{Delta} shows that for any fixed $t$, $x\mapsto \Delta(x,t)$ is a random constant plus the running maximum of a Brownian motion with drift $2(\theta_2-\theta_1)$, and so we have the following. 
 
\begin{lemma} \label{lem:W-V}
For fixed $t \in \mathbb{R}$, the function $x \mapsto \Delta(x,t)$ is non-decreasing and $\lim_{x \to \pm \infty} \Delta(x,t) = \pm \infty$.
\end{lemma}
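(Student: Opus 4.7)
The plan is to read off both properties directly from the distributional identity \eqref{Delta} in Lemma \ref{lem:coupling}. The key point is that this identity is at the level of random processes in the variable $x$ (for each fixed $t$), not just marginally at individual points, so pathwise properties of the right-hand side transfer to $x \mapsto \Delta(x,t)$.

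First I would note that the map $x \mapsto \sup_{y \leq x}\{B_3(y) - B_1(y)\}$ is, by definition, a running maximum and hence pathwise non-decreasing. Adding the ($x$-independent) constant $C$ preserves monotonicity. Thus the right-hand side of \eqref{Delta} is a.s. non-decreasing in $x$, and the distributional equality with $\Delta(\cdot,t)$ gives the first claim.

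Next I would establish the two limits by analyzing the process $Z(y) := B_3(y) - B_1(y)$. Since $B_1$ has drift $2\theta_1$ and $B_3$ is an independent Brownian motion with drift $2\theta_2 > 2\theta_1$, the difference $Z$ is a two-sided Brownian motion (variance $4$) with strictly positive drift $2(\theta_2 - \theta_1)$. By the strong law of large numbers for Brownian motion, $Z(y)/y \to 2(\theta_2-\theta_1) > 0$ almost surely as $|y| \to \infty$. Consequently $Z(y) \to +\infty$ as $y \to +\infty$ and $Z(y) \to -\infty$ as $y \to -\infty$, so the running supremum $\sup_{y \leq x} Z(y)$ tends to $+\infty$ as $x \to +\infty$ and to $-\infty$ as $x \to -\infty$. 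Adding the finite random constant $C$ does not affect the limits, so via \eqref{Delta} we obtain $\lim_{x \to \pm\infty} \Delta(x,t) = \pm\infty$.

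There is no real obstacle here: the entire lemma is an immediate corollary of Lemma \ref{lem:coupling}. The only subtlety worth flagging is that the equality in \eqref{Delta} must be interpreted as equality in distribution of the two processes indexed by $x$ (rather than just one-point marginals); this is indeed how the identity is stated and used in \cite{Busani, SS}, so pathwise properties such as monotonicity and the almost-sure limits propagate directly.
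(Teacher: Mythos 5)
Your proof is correct and takes essentially the same route as the paper: the paper simply observes, immediately after Lemma \ref{lem:coupling}, that equation \eqref{Delta} identifies $x\mapsto\Delta(x,t)$ (in law) with a random constant plus the running maximum of a two-sided Brownian motion with positive drift $2(\theta_2-\theta_1)$, whence monotonicity and the $\pm\infty$ limits follow at once. Your added remark about interpreting \eqref{Delta} as a process-level distributional identity is a fair clarification and matches how the paper uses Lemma \ref{lem:coupling}.
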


By Lemma \ref{lem:W-V} and continuity of $\Delta$ we get
\begin{corollary} \label{cor:levelset}
Fix $t \in \mathbb{R}$. For every $a \in \mathbb{R}$ there are finite quantities $L^{-}_a(t) \leq L^{+}_a(t)$ such that
$$\{x \in \mathbb{R}: \Delta(x,t) = a\} = [ L^{-}_a(t), L^{+}_a(t)].$$
\end{corollary}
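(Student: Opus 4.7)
The plan is to observe that the corollary is essentially a structural consequence of the one-variable facts already established in Lemma \ref{lem:W-V}, combined with continuity of $\Delta$ in the spatial variable. First I would record continuity: $\Delta(\cdot,t) = W_{\theta_2}(\cdot,t;0,0) - W_{\theta_1}(\cdot,t;0,0)$ is a difference of Busemann functions, each of which is continuous in its starting point (as noted in the references cited for the construction of $W_\theta$). In fact the explicit description in Lemma \ref{lem:coupling} gives $x \mapsto \Delta(x,t)$ as (in distribution, and hence almost surely for its pathwise realization) the running supremum of a Brownian motion up to a constant, which is manifestly continuous.

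Given continuity together with the monotonicity and the limits $\lim_{x\to\pm\infty}\Delta(x,t) = \pm\infty$ from Lemma \ref{lem:W-V}, the set $E_a := \{x\in\mathbb{R}: \Delta(x,t)=a\}$ is:
\begin{enumerate}
\item nonempty, by the intermediate value theorem applied to the continuous function $\Delta(\cdot,t)$, which takes values arbitrarily large and arbitrarily negative;
\item an interval (possibly a single point), because $\Delta(\cdot,t)$ is non-decreasing, so the preimage of a single value is convex in $\mathbb{R}$;
\item closed, since $E_a$ is the preimage of the closed set $\{a\}$ under a continuous function;
\item bounded, since $\Delta(x,t)\to+\infty$ as $x\to+\infty$ and $\Delta(x,t)\to-\infty$ as $x\to-\infty$ rule out unbounded tails in either direction.
\end{enumerate}

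Setting $L^-_a(t) := \inf E_a$ and $L^+_a(t) := \sup E_a$ then yields finite real numbers with $L^-_a(t) \leq L^+_a(t)$, and the four properties above give $E_a = [L^-_a(t), L^+_a(t)]$. There is no real obstacle here; the only point worth double-checking is continuity of $\Delta(\cdot,t)$, but this follows from the Busemann construction (or equivalently from the Brownian representation in Lemma \ref{lem:coupling}).
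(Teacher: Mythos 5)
Your proof is correct and follows the same route as the paper, which simply invokes Lemma \ref{lem:W-V} together with continuity of $\Delta$; you have merely spelled out the elementary real-analysis steps (monotone, continuous, diverging at $\pm\infty$ implies each level set is a nonempty compact interval) that the paper leaves implicit.
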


\begin{lemma} \label{lem:ordering}
    If $a < b$ then $L_a^{+}(t) < L_b^{-}(t)$ for every $t$.
\end{lemma}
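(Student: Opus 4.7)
The plan is to leverage only two properties: that $\Delta(\cdot,t)$ is non-decreasing (Lemma \ref{lem:W-V}), and that $\Delta$ is a single-valued function. The continuity used in Corollary \ref{cor:levelset} ensures the level sets are actually closed intervals, but the strict separation statement itself is really just a consistency check between monotonicity and single-valuedness.

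First I would record the weak inequality $L_a^+(t) \le L_b^-(t)$. If instead $L_a^+(t) > L_b^-(t)$, then take any $x_1 \in [L_a^-(t), L_a^+(t)]$ close to $L_a^+(t)$ and $x_2 \in [L_b^-(t), L_b^+(t)]$ close to $L_b^-(t)$ with $x_1 > x_2$; monotonicity of $\Delta(\cdot,t)$ would force $a = \Delta(x_1,t) \ge \Delta(x_2,t) = b$, contradicting $a < b$.

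Next, I would upgrade this to the strict inequality claimed in the lemma by contradiction. Suppose $L_a^+(t) = L_b^-(t)$ and call this common value $x_0$. Since $x_0$ belongs simultaneously to the level set $\{x: \Delta(x,t) = a\}$ and the level set $\{x: \Delta(x,t) = b\}$ (both of which are closed intervals by Corollary \ref{cor:levelset}), we would have $\Delta(x_0,t) = a$ and $\Delta(x_0,t) = b$, which is impossible since $\Delta$ is a function and $a \neq b$. Hence $L_a^+(t) < L_b^-(t)$.

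I do not expect any real obstacle here: the argument is a two-line deduction from monotonicity together with the fact that different level sets of a function are disjoint. No probabilistic input from Lemma \ref{lem:coupling} is needed at this step — that ingredient was already absorbed into Lemma \ref{lem:W-V} and Corollary \ref{cor:levelset}.
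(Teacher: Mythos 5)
Your proof is correct and uses the same ingredients as the paper's: monotonicity of $\Delta(\cdot,t)$ plus single-valuedness. The paper gets strict inequality in one step — from $\Delta(L_a^+(t),t) = a < b = \Delta(L_b^-(t),t)$ and non-decreasingness it directly concludes $L_a^+(t) < L_b^-(t)$ — whereas you first prove the weak inequality and then rule out equality, but this is only a cosmetic difference.
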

\begin{proof}
Let $x = L_a^{+}(t)$ and $y = L_b^{-}(t)$. We have $\Delta(x,t) = a < b = \Delta(y,t)$.
Since $\Delta$ is monotone in the first argument, it follows that $x < y$.
\end{proof}

\begin{definition} \label{def:deltasim}
   Fix $\theta_1<\theta_2$. Define a relation on $\mathbb{R}^2$ as follows. We say $p_1 \sim p_2$ if there is a point $r$ such that any geodesic in direction $\theta_i$ from $p_j$ passes through $r$  for $i=1,2$ and $j=1,2$.
   \end{definition}

If $p \sim q$ then $\Delta(p) = \Delta(q)$ because, then, $W_{\theta_2}(p;q) = W_{\theta_1}(p;q)$. Conversely, if $p = (x,s)$ and $q = (y,s)$ and $\Delta(p) = \Delta(q)$, then $p \sim q$ by Theorem 7.8 in \cite{BSS}. In particular, for every time $s \in \mathbb{R}$, the partition of $\mathbb{R}$ formed by the intervals $[L_a^-(s), L_a^+(s)]$ for $a \in \mathbb{R}$ is determined from the geodesic trees $\mathrm{GT}_{\theta_1}$ and $\mathrm{GT}_{\theta_2}$. In other words, the support of the measure associated to the monotone function $\Delta$ is determined by the two trees.

\begin{lemma} \label{lem:deltareconstruct}
    We can determine the function $\Delta$ from the geodesic trees $\mathrm{GT}_{\theta_1}$ and $\mathrm{GT}_{\theta_2}$.
\end{lemma}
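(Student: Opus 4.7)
My strategy is to first determine the level-set partition of $\Delta$ from the trees, then anchor $\Delta$ through a single normalization propagated along $\theta_2$-geodesics, and finally extract the numerical jump heights using the stationary-horizon representation from Lemma \ref{lem:coupling}.

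By the discussion immediately preceding the lemma (which invokes \cite[Lemma 7.8]{BSS}), the relation $\sim$ of Definition \ref{def:deltasim} is a measurable function of $\mathrm{GT}_{\theta_1}\cup\mathrm{GT}_{\theta_2}$, and two points at the same time satisfy $p\sim q$ iff $\Delta(p)=\Delta(q)$. Thus at every time $t$, the partition of $\mathbb R$ into the constancy intervals $[L_a^-(t),L_a^+(t)]$ of $\Delta(\cdot,t)$, together with its linear order, is recovered from the trees, as are the two-dimensional $\sim$-classes in $\mathbb R^2$.

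Since $\Delta(0,0)=0$ and $\Delta$ is constant along $\theta_2$-geodesics by Lemma \ref{lem:relativeD}(1), the value of $\Delta$ is known on the entire $\sim$-class of $(0,0)$. In particular, letting $\gamma_2$ be the $\theta_2$-geodesic from $(0,0)$ in $\mathrm{GT}_{\theta_2}$, we have $\Delta(\gamma_2(t),t)=0$ for all $t$, so the remaining task is to determine the normalized profile $x\mapsto \Delta(x,t)-\Delta(\gamma_2(t),t)$. By Lemma \ref{lem:coupling}, this profile has the distribution of a running maximum of a Brownian motion $B_3-B_1$ with drift $2(\theta_2-\theta_1)$, anchored to vanish at $x=\gamma_2(t)$; its jump points are exactly the boundaries between adjacent constancy intervals from the previous step, and its asymptotic slopes at $\pm\infty$ are forced to be $2(\theta_2-\theta_1)$.

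The main obstacle is extracting the individual jump heights of $\Delta(\cdot,t)$ from the geometric partition alone. My plan is to exploit the local tree geometry at each jump boundary $y$: the $\theta_1$- and $\theta_2$-geodesics emanating from $(y,t)$ and from points just across the jump exhibit distinct merge asymptotics, and by matching these against the stationary-horizon description of Lemma \ref{lem:coupling} one can measurably recover the record value $(B_3-B_1)(y)$ from local statistics of the two trees. Once the profile $x\mapsto\Delta(x,t)$ is recovered at a dense set of times, continuity of $\Delta$ extends the construction to all of $\mathbb R^2$, showing that $\Delta$ is a measurable function of $\mathrm{GT}_{\theta_1}$ and $\mathrm{GT}_{\theta_2}$.
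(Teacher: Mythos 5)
Your proposal correctly identifies the first step — using \cite[Lemma 7.8]{BSS} and the equivalence relation $\sim$ to recover, at each fixed time, the partition of $\mathbb R$ into constancy intervals of $\Delta(\cdot,t)$, and correctly identifies that one then needs (i) the jump heights and (ii) a way to stitch times together. Both of your mechanisms for those two steps, however, have gaps.

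First, the anchoring across times: you assert that $\Delta$ is constant along $\theta_2$-geodesics, citing Lemma \ref{lem:relativeD}(1). This is false. Lemma \ref{lem:relativeD}(1) says $\mathcal D_{\theta_2}(p;q)=0$ on $\theta_2$-ancestral pairs, i.e.\ $W_{\theta_2}(p;q)=\mathcal L(p;q)$; it says nothing about $W_{\theta_1}$ on those pairs. In fact, by additivity $\Delta(p)-\Delta(q)=W_{\theta_2}(p;q)-W_{\theta_1}(p;q)=\mathcal L(p;q)-W_{\theta_1}(p;q)=-\mathcal D_{\theta_1}(p;q)\le 0$ for $\theta_2$-ancestral $(p;q)$, with equality only when $(p;q)$ is \emph{also} $\theta_1$-ancestral. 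So $\Delta$ strictly increases along a typical $\theta_2$-geodesic (and strictly decreases along a typical $\theta_1$-geodesic — that is precisely Lemma \ref{lem:relativeD}(2)). The paper instead anchors the time-dependent constant by proving that for any two rational times $s<t$ there almost surely exist $x,y$ with $(x,s)\sim(y,t)$, via an ergodicity argument combined with a positive-probability estimate for a coalescence event built from $A_y, E_{y,t}, E'_{y,t}$, using tail bounds from \cite[Theorem 3.12]{RV} and the stationary-horizon law from Lemma \ref{lem:coupling}. Something along those lines is unavoidable; the geodesic-invariance shortcut does not exist.

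Second, the recovery of the profile $x\mapsto \Delta(x,t)$ from its plateau structure: your plan is to exploit ``local tree geometry'' and ``merge asymptotics'' at jump boundaries, matched ``against the stationary-horizon description.'' This is not an argument — it is a restatement of the problem. The paper supplies the actual mechanism in Lemma \ref{lem:taylorwendel}: the running maximum of a Brownian motion with drift is, almost surely, recovered from the support $K$ of its associated measure via the exact Hausdorff measure $m_\varphi$ with gauge $\varphi(h)=(h\log|\!\log h|)^{1/2}$, through L\'evy's identification of the maximum process with local time and the Taylor--Wendel theorem on the exact Hausdorff measure of the Brownian zero set. This gives a deterministic, measurable map from $K\cap[x,y]$ to $S(x)-S(y)$, which is exactly what is needed and what your sketch omits.
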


\begin{proof}
For a fixed time $s$, the closed intervals $[L_a^-(s), L_a^+(s)]$, $a \in \mathbb{R}$, form a partition of $\mathbb{R}$ that is determined from $\mathrm{GT}_{\theta_1}$ and $\mathrm{GT}_{\theta_2}$ by Theorem 7.8 in \cite{BSS} (as mentioned above). 
These are exactly the intervals of constancy (or plateaus) of the Busemann difference profile $\Delta$.
Recall from Lemma \ref{lem:coupling} that $\Delta(x,s)= \sup_{y\le x} \{B_4(2y)+2(\theta_2-\theta_1)y\} + C_s$ where $B_4$ is a standard Brownian motion and $C_s$ is a random constant.
Then by Lemma \ref{lem:taylorwendel} below, we can reconstruct $x \mapsto \Delta(x,s)-\Delta(0,s)$ from the plateaus.

The above determines $\Delta(\cdot, s) - \Delta(0,s)$ at all rational times $s$. Since $\Delta$ is continuous, we can extend it to all times. Next, we show how to remove the $\Delta(0,s)$ term and determine $\Delta$ globally.
Note that $\Delta$ is constant on the equivalence classes defined in Definition \ref{def:deltasim} (this equivalence relation is finer than the level set one but can also be determined from the trees $\mathrm{GT}_{\theta_1}$ and $\mathrm{GT}_{\theta_2}$). So it suffices to show that for all rational times $s \in \mathbb{R}$, there are points $x,y \in \mathbb{R}$ so that $(x,0)\sim (x+y,s)$. Then we will have $\Delta(x,0) = \Delta(x+y,s)$ and, therefore, $(\Delta(x,0) - \Delta(0,0)) - (\Delta(x+y,s)-\Delta(0,s)) = \Delta(0,s) - \Delta(0,0) = \Delta(0,s)$. Thus, by continuity, we can determine $\Delta(0,s)$ for every $s$ and recover $\Delta$.

We may assume w.l.o.g. that $s > 0$. By ergodicity of the directed landscape under spatial shifts, see \cite[Lemma B.4]{BSS}, it suffices to show that for every $s >0$ there exists $y$ so that $(0,0)\sim (y,s)$ with positive probability.
Indeed, if $(0,0) \sim (y,s)$ with positive probability then ergodicity implies there almost surely exists an $x \in \mathbb{R}$ such that $(x,0) \sim (x+y,s)$.

Let $y>0$. The event $(0,0) \sim (y,s)$ happens if the following three events occur. The event $A_y$ is that $(0,0)\sim (2y,0)$. The event $E_{y,s}$ is  that the geodesic in direction $\theta_1$ from $(0,0)$ is at most $y$ at time $s$. 
The event $E_{y,s}'$ is  that the geodesics in direction $\theta_1$ from $(2y,0)$ is at least $y$ at time $s$. Call the first intersection point of these two geodesics $r$.

On $A_y$, the geodesics in direction $\theta_2$ from $(0,0)$ and $(2y,0)$ both pass through $r$, and until the time of $r$ they must agree with the corresponding geodesics in direction $\theta_1$. 

On $E_{y,s}\cap E'_{y,s}$, by geodesic ordering, the geodesic from $(y,s)$ in directions $\theta_1$ and $\theta_2$ must also pass through $r$. This now implies that $(y,s) \sim (0,0)$.

There are constants $c_2$ and $b_2$, depending of $\theta_1$, such that 
$$P(E_{y,s}^c) \le c_2e^{-b_2(y^3/s^2)}, \quad P({E'}_{y,s}^c) \le c_2e^{-b_2(y^3/s^2)}.$$
For $s=1$ this follows from the tail bound on infinite geodesics given in \cite[Theorem 3.12]{RV}. For general $s$, the bound follows from KPZ scaling. 

By  \cite[Theorem 7.8]{BSS}, $A_y$ holds as long as $\Delta(2y,0) = \Delta(0,0) = 0$, that is, $W_{\theta_1}(2y,0;0,0) = W_{\theta_2}(2y,0;0,0)$.
Its probability is calculated in \cite[Theorem 3.11]{SS} and \cite[Theorem 2.4.1]{Sorensen} using the notion of the stationary horizon.
We quote the result. Let $\xi = \theta_2-\theta_1$ and $\Phi$ be the CDF of the standard Normal distribution.
$$ P(A_y) = \Phi(- \xi \sqrt{2y}) + (1 + \xi^2 4y)\Phi(- \xi \sqrt{2y}) - \xi \frac{2\sqrt{y}}{\sqrt{\pi}} e^{- \xi^2 y}.$$
It follows from the expression above that there are positive constants $c_1, b_1$ that depend on $\theta_i$ such that
$$ P(A_y) \geq c_1 e^{-b_1 y}.$$

Putting these together, and using a union bound, we have
$$
P(A_{y}\cap E_{y,t}\cap E_{y,t}')\ge  c_1e^{-b_1y} - 2c_2e^{-b_2(y^3/t^2)}
$$
and the right hand side is positive as long as $y$ is large enough, as required.
\end{proof}

\begin{lemma}  \label{lem:taylorwendel}
Let $B$ be two-sided Brownian motion with drift $a>0$, and  
$S(x)=\sup_{-\infty\le y\le x} B(y)$. Set
$$K=\mathbb R\setminus \bigcup_{x<y,S(x)=S(y)} (x,y).$$
Set $K$ is the support of the measure associated to the monotone function $S$.
Almost surely, for all $x<y$,
$$S(y)-S(x)=m_\varphi([x,y]\cap K),$$
where $m_\varphi$ is the Hausdorff measure for the gauge function $\varphi(h)=(h\log |\!\log h|)^{1/2}$.
\end{lemma}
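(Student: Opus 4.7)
The plan is to identify $K$ with the zero set of a reflected diffusion for which $S$ is the local time at $0$, and then invoke the Taylor--Wendel theorem on exact Hausdorff measures of Brownian zero sets.

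First I would show that $K$ coincides with $Z := \{x : B(x) + ax = S(x)\}$, the zero set of the continuous nonnegative process $R(x) := S(x) - B(x) - ax$. If $B(x_0) + ax_0 < S(x_0)$, continuity forces $B(y) + ay < S(x_0)$ for $y$ in a neighborhood of $x_0$, so $S$ is constant there and $x_0$ lies in the removed union; the reverse inclusion is a similar continuity argument. Next I would exhibit $R$ as a reflected Brownian motion: for $u > x_0$,
\begin{equation*}
R(u) = R(x_0) - (B(u) - B(x_0)) - a(u - x_0) + (S(u) - S(x_0)),
\end{equation*}
and $S(u) - S(x_0)$ is the unique nondecreasing process supported on $\{R = 0\}$ keeping $R \geq 0$. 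Thus $R$ is a reflected Brownian motion (variance $2$, drift $-a$) on $[x_0, \infty)$, and $S - S(x_0)$ is its reflection local time at $0$.

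The main step is the Taylor--Wendel theorem (sharpened by Perkins): for reflected Brownian motion, almost surely and simultaneously for all $x < y$ in the time domain, the local time increment equals the $\varphi$-Hausdorff measure of the zero set in $(x, y)$. To apply it here I would anchor at some $x_0 \in K$ with $x_0 < x$; such an $x_0$ exists because $B(y) + ay \to -\infty$ as $y \to -\infty$ forces $K$ to be unbounded below (for every $N$, the argmax of $y \mapsto B(y) + ay$ on $(-\infty, N]$ is necessarily a running-maximum time, hence lies in $K$). On the bounded interval $[x_0, y]$, Girsanov transfers the pathwise identity from driftless reflected Brownian motion, giving the stated formula (up to the obvious sign convention, since $S$ is nondecreasing).

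The hard part is pinning down that the prefactor in the gauge is exactly $1$, given that $B$ has variance $2$ and $S$ is the plain running supremum rather than a conventionally normalized local time. This amounts to one Brownian scaling computation, combined with L\'evy's identity between the running maximum of Brownian motion and the local time of its reflection at $0$; after that, the exact Hausdorff measure representation itself is a direct citation of the classical result.
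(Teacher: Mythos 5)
Your proposal is correct and takes essentially the same route as the paper: reduce the problem to a Brownian zero set, remove the drift by absolute continuity/Girsanov, and invoke the Taylor--Wendel exact Hausdorff measure theorem. The cosmetic differences are that the paper packages the key step via L\'evy's theorem (the pair consisting of the running maximum and its record set has the joint law of local time and the zero set), whereas you construct the Skorokhod-reflected process $R = S - B - a\cdot$ explicitly and identify $S$ as its boundary local time; and the paper reaches a bounded interval by translating so that $S$ agrees with a one-sided running maximum from the origin, whereas you anchor at a point $x_0 \in K$ to the left of $x$. Both devices work. Two small remarks: the displayed identity has a sign typo and should read $S(y)-S(x)$, as you noticed; and the normalization worry you flag is resolved by reading ``standard'' in the lemma as variance $1$, so that L\'evy's identity makes $S$ literally the local time appearing in the cited form of Taylor--Wendel, with the variance-$2$ input from the rest of the section absorbed when the lemma is applied through the substitution $B_4(2y)$.
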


\begin{proof}
This is an application of Theorem 1 of \cite{taylor1966exact}.
See also \cite{perkins1981exact, greenwood1980construction} about how to reconstruct Brownian local time from its support.

Note that $S(x) < \infty$ due to $B$ having positive drift.
By translation-invariance and continuity, it suffices to show the result for all sufficiently large rational $x$ and rational $y>x$. 

Set $M(x)=\max_{0\le y \le x}B(y)$.
Note that for $x\ge 0$,  $S(x)=\max(S(0),M(x)).$ Let $X \in [0,\infty)$ be the first non-negative time such that $B(X)\ge S(0)$. This is finite since $B$ has positive drift. Then for all $x\ge X$, $S(x)=M(x)$.  So for $y > x \geq X$,
$$ K \cap [x,y] = \{t \in [x,y]: t\;\text{is a point of increase of}\; M(\cdot)\}.$$
But the points of increase of $M$ are precisely the set of record times, that is,
$K\cap[x,y]$ equals $\{t\in[x,y]:B(t)=M(t)\}$. 

So it suffices to show that for any rational $0<x<y$ we have, almost surely, 
\begin{equation}\label{e:mB}
m_\varphi\{t\in [x,y]:B(t)=M(t)\}=M(y)-M(x)\qquad \text{a.s}
\end{equation}

Since \eqref{e:mB}  is an almost sure statement, and the law of drifted Brownian motion on $[0,y]$ is absolutely continuous with respect to Brownian motion with no drift, it suffices to show the claim \eqref{e:mB} when $B$ is Brownian motion without drift. So for the rest of the proof, let $B, M$ be associated to Brownian motion with zero drift ($a=0$). 

In the driftless scenario, by Levy's theorem (Theorem 6.10 \cite{morters2010brownian}),
 we have the equality in law:
 \begin{align}\notag
\Big((M(t),&t\in[x,y]),\{t\in[x,y]:B(t)=M(t)\}\Big)\;\\ &\stackrel{d}{=}\;\Big(L(t),t\in[x,y]),\{t\in [x,y]:B(t)=0\}\Big).\label{e:mb2}
\end{align}
where $L(t)$ is local time process of $B$ at zero. 
By Theorem 1 of \cite{taylor1966exact}, see Theorem 6.41 in \cite{morters2010brownian},
we have \begin{equation}\label{e:mb3}
    m_\varphi\Big(\{t\in [x,y]:B(t)=0\}\Big)=L(y)-L(x) \qquad \text{a.s.,}
\end{equation} and \eqref{e:mb2} and \eqref{e:mb3} imply \eqref{e:mB}, as required.
\end{proof}

\section{Proof of Theorem \ref{thm:main}}
\begin{proof}
Let $\theta_1 \leq \theta \leq \theta_2$. Suppose we are given samples of the geodesic trees $\mathrm{GT}_{\theta_1}$ and $\mathrm{GT}_{\theta_2}$.

Recall the differential distance $\mathcal{D}_{\theta}$ from \eqref{eqn:D} and the restricted differential distance $\mathcal{D}_{\theta}^{\theta_1,\theta_2}$ from \eqref{eqn:Drelative}.

From these samples we can reconstruct the Busemann difference profile
$\Delta(x,t) = W_{\theta_2}(x,t;0,0) - W_{\theta_1}(x,t;0,0)$
by Lemma \ref{lem:deltareconstruct}.
Therefore, by Lemma \ref{lem:relativeD}, we can determine $\mathcal{D}_{\theta_2}^{\theta_1,\theta_2}$. 
This determines the geodesic tree $\mathrm{GT}_{\theta}$ by Remark \ref{rem:DandLlength12}. 

By Lemma \ref{lem:deltareconstruct} again, we can reconstruct the Busemann difference profile
 $\Delta_2(x,t) = W_{\theta_2}(x,t;0,0) - W_{\theta}(x,t;0,0)$,
 and also
$$\mathcal{D}_{\theta}^{\theta_1,\theta_2}(p;q)=\mathcal{D}_{\theta_2}^{\theta_1,\theta_2}(p;q)+\Delta_2(p)-\Delta_2(q).$$

Next, as $-\theta_1(n), \theta_2(n) \to\infty$ along monotone sequences, the restricted differential distance $\mathcal D_{\theta}^{\theta_1(n),\theta_2(n)}(p;q)$ is non-increasing by Corollary \ref{cor:theta}, and hence converges to a limit $\mathcal{D}_\theta^{\infty}$.

Let $U$ be the set of point pairs $(p;q)$ for which $p$ is not a forward 3-star and there is a $pq$-geodesic with respect to $\mathcal L$ that is also a $\theta_1-\theta_2$ path for some $\theta_1 \leq \theta_2$. 

By Corollary \ref{cor:theta}, for every $(p;q)\in U$ and for all large enough $n$,
$\mathcal D_{\theta}^{\theta_1(n),\theta_2(n)}(p;q)=\mathcal D_\theta(p;q)$. So $\mathcal{D}_{\theta}^{\infty}(p;q) = \mathcal{D}_{\theta}(p;q)$.

The set $U$ is dense in $\mathbb R^4_\uparrow$ by Proposition \ref{prop:density}. Since $\mathcal D_{\theta}$ is continuous, we see that $\mathcal{D}_{\theta}$ is determined by $\mathcal{D}^{\infty}_{\theta}$:
$$
\mathcal D_\theta(u)=\liminf_{v\to u, v \in U}\mathcal D_\theta^\infty(v).
$$
By Lemma \ref{lem:DandGT}, $\mathcal D_{\theta}$ determines $\mathcal L$.
As such, we have constructed (implicitly, through these steps) a measurable function $F$ that satisfies \eqref{eqn:F} and hence proves the claim of Theorem \ref{thm:main}.

\end{proof}

\section*{Acknowledgements}
The authors thank the hospitality of the Institut Mittag-Leffler during the program ``Random Matrices and Scaling Limits" where this work was partly completed. The authors also thank Evan Sorensen for pointing out some references in the literature. BV was partially supported by an NSERC Discovery Grant. 

\bibliographystyle{dcu} 
\bibliography{geodesictree}
\bigskip

\noindent Mustazee Rahman. Department of Mathematical Sciences, Durham University, Durham, UK. mustazee@gmail.com

\medskip
\noindent B\'alint Vir\'ag. Departments of Mathematics, University of Toronto, 40 St. George St., Toronto, Ontario, M5S2E4. balint@math.toronto.edu
\end{document}